\newcommand{\id}{ {1\!\!\!\:1 } }
\DeclareMathOperator{\OO}{O}
\DeclareMathOperator{\diag}{diag}
\DeclareMathOperator{\sym}{sym}
\DeclareMathOperator{\dev}{dev}
\DeclareMathOperator{\Cof}{Cof}
\DeclareMathOperator{\GLp}{GL^+}
\DeclareMathOperator{\PSym}{PSym}
\DeclareMathOperator{\Sym}{Sym}
\DeclareMathOperator{\tr}{tr}
\newcommand{\R}{\mathbb{R}}
\newcommand{\N}{\mathbb{N}}
\newcommand{\C}{\mathbb{C}}
\newcommand{\Rn}{\mathbb{R}^n}
\newcommand{\Rmat}[2]{\mathbb{R}^{#1\times #2}}
\newcommand{\Rnn}{\Rmat{n}{n}}
\newcommand{\GLpn}{\GLp(n)}
\newcommand{\On}{\OO(n)}
\newcommand{\Symn}{\Sym(n)}
\newcommand{\PSymn}{\PSym(n)}
\newcommand{\ds}{\mathrm{ds}}
\newcommand{\dt}{\mathrm{dt}}
\newcommand{\dz}{\mathrm{dz}}
\newcommand{\dd}[1]{\frac{\mathrm{d}}{\mathrm{d#1}}}
\newcommand{\ddh}{\dd{h}}
\newcommand{\ddt}{\dd{t}}
\newcommand{\grad}{\nabla}
\newcommand{\khat}{\hat{k}}
\newcommand{\Htilde}{\widetilde H}
\newcommand{\sigmahat}{\widehat \sigma}
\newcommand{\tel}[1]{\frac{1}{#1}}
\newcommand{\half}{\tel{2}}
\newcommand{\norm}[1]{\Vert #1 \Vert}
\newcommand{\innerproduct}[1]{\langle #1 \rangle}
\newcommand{\eval}[2]{\left.{#1}\right|_{#2}}
\newcommand{\setvert}{\,|\,}
\newcommand{\matr}[1]{\begin{pmatrix} #1 \end{pmatrix}}
\newcommand{\matrs}[1]{\left(\begin{smallmatrix} #1 \end{smallmatrix}\right)}
\newcommand{\emptydmatr}[3]{\matr{#1&&\\ &#2&\\ &&#3}}	
\newcommand{\nnl}{\nonumber \\}
\newcommand{\inv}{^{-1}}
\newcommand{\eqq}{\;=\;}
\newcommand{\WeH}{W_{\textrm{eH}}}
\newcommand{\WH}{W_{\textrm{H}}}
\newcommand{\pdd}[2]{\frac{\partial #1}{\partial #2}}
\theoremstyle{plain}
\newcounter{theoremCounter}
\numberwithin{theoremCounter}{section}
\newtheorem{lemma}[theoremCounter]{Lemma}
\newtheorem{proposition}[theoremCounter]{Proposition}
\newtheorem{corollary}[theoremCounter]{Corollary}
\theoremstyle{definition}
\newtheorem{remark}[theoremCounter]{Remark}
\newcommand{\Hspace}{V}
\newcommand{\analytic}{\mathscr{H}}
\newcommand{\intfrac}{\frac{1}{2\pi i}}
\newcommand{\cproduct}[1]{\innerproduct{#1}_\C}
\newcommand{\fulldiag}{\diag(\lambda_1,\dotsc,\lambda_n)}
\newcommand{\fullmatr}{\matrs{\lambda_1&&\\&\ddots&\\&&\lambda_n}}
\newcommand{\Hoff}{H^{\mathrm{off}}}
\newcommand{\Hdiag}{H^{\mathrm{diag}}}
\newcommand{\Adiag}{A^{\mathrm{diag}}}
\title{Some remarks on the monotonicity\\ of primary matrix functions on the set of symmetric matrices}
\author{
Robert Martin\thanks{Corresponding author, Lehrstuhl f\"{u}r Nichtlineare Analysis und Modellierung, Fakult\"{a}t f\"{u}r Mathematik, Universit\"{a}t Duisburg-Essen,  Thea-Leymann Str. 9, 45127 Essen, Germany, email: robert.martin@uni-due.de} \quad
and \quad
Patrizio Neff\thanks{Head of Lehrstuhl f\"{u}r Nichtlineare Analysis und Modellierung, Fakult\"{a}t f\"{u}r Mathematik, Universit\"{a}t Duisburg-Essen,  Thea-Leymann Str. 9, 45127 Essen, Germany, email: patrizio.neff@uni-due.de}
}
\date{\today}
\begin{document}
\maketitle

\begin{abstract}
This note contains some observations on primary matrix functions and different notions of monotonicity with relevance towards constitutive relations in nonlinear elasticity. Focussing on primary matrix functions on the set of symmetric matrices, we discuss and compare different criteria for monotonicity. The demonstrated results are particularly applicable to computations involving the \emph{true-stress-true-strain} monotonicity condition, a constitutive inequality recently introduced in an Arch.\ Appl.\ Mech.\ article by C.S. Jog and K.D. Patil. We also clarify a statement by Jog and Patil from the same article which could be misinterpreted.
\end{abstract}

\tableofcontents

\newpage
\section{Preliminaries}
This note has been inspired by our reading of Jog's and Patil's interesting work on elastic stability \cite{Jog2013} which is full of new ideas and insights, notably the inspiring introduction of the \emph{true-stress-true-strain} monotonicity condition (c.f. \cite{NeffGhibaLankeit})
\begin{align}
	&\innerproduct{\sigma(X)-\sigma(Y),\, X-Y}\geq 0\quad \forall X,Y\in\Sym(3)\,, \tag{TSTS-M$^+$}\label{eq:jogInequality}\\[2mm]
	&\sigma(\log V) = \tel{\det V}\cdot\tau(\log V) = e^{-\tr(\log V)}\cdot\underbrace{\partial_{\log V}\, W(\log V)}_{=:\tau(\log V)}\,,\nonumber
\end{align}
where $\sigma$ is the \emph{Cauchy-stress} (or \emph{true stress}) tensor considered as a function of the \emph{logarithmic} (or \emph{true}) \emph{strain} $\log V$,\: $V=\sqrt{FF^T}$ is the \emph{left Biot-stretch} tensor and \[\innerproduct{M,N}:=\tr(M^TN)=\sum_{i,j=1}^n X_{i,j}Y_{i,j}\] denotes the canonical inner product on $\Rnn$.
Inequality \eqref{eq:jogInequality}, which can also be stated as %
\begin{equation}
	\sym\,\pdd{\sigma(\log V)}{\log V} \quad \text{ is positive definite,} %
\end{equation}
was originally used by Jog and Patil \cite{Jog2013} to characterize material instabilities in elastic materials.
While inequality \eqref{eq:jogInequality} is not fulfilled by the stress response induced by the \emph{isotropic Hencky energy}\cite{Anand79,Anand86,Hencky1929,henckyTranslation,Neff_Eidel_Osterbrink_2013}
\[
	\WH = \mu\,\norm{\dev_n \log V}^2 \,+\, \frac\kappa2 \, [\tr(\log V)]^2
\]
with the shear modulus $\mu$ and the bulk modulus $\kappa$, the energy function
\begin{equation}
	W = \frac{\mu}{k} \, e^{k\,\norm{\log V}^2} \,+\, \frac{\lambda}{2\,\khat} \, e^{\khat\,[\tr(\log V)]^2}\,, \qquad k>\frac38\,,\; \khat>\frac18\,, \label{eq:TSTSenergy}
\end{equation}
which approximates the Hencky energy for sufficiently small strains, satisfies \eqref{eq:jogInequality} on all of $\PSymn$ \cite[Corollary 4.1]{NeffGhibaLankeit}; here, $\tr X = \sum_{i=1}^n X_{i,i}$ is the trace of $X\in\Rnn$, $V=\sqrt{FF^T}$ is the left Biot-stretch tensor, $\norm{X} = \sqrt{\tr X^TX}$ denotes the Frobenius matrix norm, $\dev_n X = X-\frac{\tr X}{n} \,\id$ is the deviatoric part of $X$ and $\mu,\lambda$ are the two Lam\'e constants. Furthermore, another variant recently introduced as the \emph{exponentiated Hencky energy} \cite{NeffGhibaLankeit,NeffGhibaExpHenckyPart3,NeffGhibaExpHenckyPart2}
\[
	\WeH = \frac{\mu}{k} \, e^{k\,\norm{\dev_n \log V}^2} \,+\, \frac{\kappa}{2\,\khat} \, e^{\khat\,[\tr(\log V)]^2}
\]
with dimensionless parameters $k>\frac13$ and $\khat>\frac18$ fulfils \eqref{eq:jogInequality} on the conical \enquote{elastic domain}
\[
	\mathcal{E} = \left\{ V\in\PSymn : \norm{\dev \log V}^2 \leq \frac23\, \text{\boldmath$\displaystyle\mathbf{\sigma}$}_{\mathrm{y}}^2 \right\}
\]
for a given yield stress $\text{\boldmath$\mathbf{\sigma}$}_{\mathrm{y}}$ \cite[Remark 4.1]{NeffGhibaLankeit}. For other well-known energy functions like Neo-Hooke, Mooney-Rivlin or the Ogden energy, \eqref{eq:jogInequality} is not satisfied. Until \eqref{eq:TSTSenergy} it was not even clear whether there exists an isotropic hyperelastic formulation satisfying \eqref{eq:jogInequality} at all.

We believe that the true-stress-true-strain monotonicity condition has the potential to greatly advance the subject of constitutive requirements in nonlinear elasticity.
Therefore, we find it apt to shed some light on different notions of monotonicity and their interconnections which arise in nonlinear elasticity in general as well as in computations for checking inequality \eqref{eq:jogInequality} in particular.
Since many of the stress tensors in nonlinear elasticity are symmetric, we consider in the following matrix functions mapping a convex subset of $\Symn$ to the set $\Symn$ of symmetric matrices. Of particular interest is the monotonicity of the principal matrix logarithm $\log$ on the set $\PSymn$ of positive definite matrices.

\subsection{A simple observation on monotonicity}
Let $\Hspace$ be a finite-dimensional Hilbert space with the inner product $\innerproduct{\cdot,\cdot}$ and let $M\subset \Hspace$ be a convex open subset of $\Hspace$. A function $f:M\to \Hspace$ is called \emph{monotone} (or \emph{Hilbert space monotone}) on $M$ if \[\innerproduct{f(A)-f(B),\, A-B}\geq 0\] for all $A,B\in M$, and it is called \emph{strictly monotone} if \[\innerproduct{f(A)-f(B),\, A-B} > 0\] for all $A\neq B\in M$. It is well known that a function $f\in C^1(M,\Hspace)$ is monotone on $M$ if and only if \[\innerproduct{Df[A].H,\,H}\geq 0\] for all $H\in \Hspace$, i.e.\ if and only if the Fr\'echet derivative $Df[A]$ is positive semi-definite for all $A\in M$, and it is strictly monotone on $M$ if
\begin{equation}
	\innerproduct{Df[A].H,\,H}> 0 \label{eq:strongStrictPositiveDefiniteness}
\end{equation}
for all $H\in \Hspace$, i.e.\ if $Df[A]$ is positive definite for all $A\in M$. Note, however, that \eqref{eq:strongStrictPositiveDefiniteness} is not a necessary condition for strict monotonicity.

The following lemma shows that for a continuously differentiable function $f$ on a convex set whose derivative $Df$ is self-adjoint and invertible everywhere, the positive definiteness of $Df$ in a single point is sufficient for $f$ to be strictly monotone everywhere.

\begin{lemma}
\label{lemma:posDefSinglePoint}
Let $M\subset \Hspace$ be a convex open subset of $\Hspace$, and let $f\in C^1(M,\Hspace)$ satisfy
\begin{alignat*}{3}
&\text{i)} && \quad\exists\:A_0\in M: \quad &&Df[A_0]\text{ is \emph{positive definite},}\\
&\text{ii)} && \quad\forall\:A\in M: \quad &&Df[A]\text{ is \emph{invertible} and \emph{self-adjoint}.}
\end{alignat*}
Then $\innerproduct{Df[A].H,\, H} > 0$ for all $H\in V$ and thus $f$ is strictly monotone on $M$.
\end{lemma}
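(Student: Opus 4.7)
The strategy is a continuity–connectedness argument: for a self-adjoint operator on a finite-dimensional Hilbert space the eigenvalues depend continuously on the operator, and invertibility rules out the eigenvalue $0$. Thus along any continuous path of self-adjoint invertible operators the sign pattern of the spectrum cannot change, and since $M$ is convex hence path-connected, positive definiteness of $Df$ at the single point $A_0$ must propagate to all of $M$.

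To implement this, fix an arbitrary $A\in M$ and consider the line segment $\gamma(t) = (1-t)A_0 + tA$, $t\in[0,1]$, which lies in $M$ by convexity. Since $f\in C^1(M,\Hspace)$, the map $t\mapsto T(t) := Df[\gamma(t)]$ is continuous, and each $T(t)$ is self-adjoint by hypothesis (ii). Define
\[
	g(t) \eqq \min_{\norm{H}=1}\, \innerproduct{T(t).H,\,H}\,,
\]
which by self-adjointness coincides with the smallest eigenvalue of $T(t)$. The functional $T \mapsto \min_{\norm{H}=1}\innerproduct{TH,H}$ is $1$-Lipschitz in the operator norm — an immediate consequence of the min characterisation — so $g\colon[0,1]\to\R$ is continuous. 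By hypothesis (i) we have $g(0)>0$; if $g(1)\leq 0$, the intermediate value theorem yields some $t^*\in(0,1]$ with $g(t^*)=0$, so that $Df[\gamma(t^*)]$ has $0$ as an eigenvalue — contradicting the invertibility assumed in (ii). Hence $g(1)>0$, i.e.\ $Df[A]$ is positive definite; since $A\in M$ was arbitrary this gives $\innerproduct{Df[A].H,\,H}>0$ for every $A\in M$ and every nonzero $H\in\Hspace$, and the strict monotonicity of $f$ then follows from the sufficient criterion recalled immediately before the lemma.

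The only point that might at first appear delicate is the continuous dependence of the smallest eigenvalue on the operator, but in finite dimensions this is elementary and I do not anticipate any serious obstacle; the bulk of the argument really is just the standard spectral-sign-rigidity observation combined with convexity of $M$.
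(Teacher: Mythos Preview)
Your argument is correct and is essentially the same as the paper's own proof: both track the smallest eigenvalue of $Df$ along a path from $A_0$ to an arbitrary point of $M$, use continuity of the minimum eigenvalue for self-adjoint operators, and invoke the intermediate value theorem together with invertibility to rule out a zero crossing. The only cosmetic differences are that you argue directly rather than by contradiction and specify the straight-line segment, whereas the paper takes a general $C^1$ curve and remarks afterward that mere connectedness of $M$ already suffices for the positive-definiteness conclusion.
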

\begin{proof}
Assume that $f$ is not monotone on $M$. Then there exists $A_1\in M$ such that $Df[A_1]$ is not positive semi-definite. Since $f$ is continuously differentiable, the function
\begin{equation*}
	\varphi: M\to\R:\quad \varphi(A) = \lambda_{\min}(Df[A])
\end{equation*}
mapping $A$ to the smallest eigenvalue of $Df[A]$ is continuous on $M$; note that the mapping of a matrix to its smallest eigenvalue is continuous on a set of self-adjoint tensors.\\
The set $M$ is convex (and thus connected) by assumption, hence we can choose a curve $\gamma\in C^1([0,1];M)$ with $\gamma(0)=A_0$, $\gamma(1)=A_1$ and obtain
\begin{align*}
	\varphi(\gamma(0)) &\:=\: \lambda_{\min}(Df[A_0]) \:>\: 0\,,\nnl
	\varphi(\gamma(1)) &\:=\: \lambda_{\min}(Df[A_1]) \:\leq\: 0\,.
\end{align*}
Thus there exists $a\in(0,1]$ with $\varphi(\gamma(a))=0$, according to the intermediate value theorem. But then $0$ is an eigenvalue of $Df[\gamma(a)]$ and hence $Df[\gamma(a)]$ is not invertible, contradicting ii).
\end{proof}
\begin{remark}
Note that while the proof requires $M$ to be convex in order to show monotonicity, connectedness of $M$ is sufficient to show that $Df$ is positive definite everywhere.
\end{remark}
\begin{remark}
In the one-dimensional case, Lemma \ref{lemma:posDefSinglePoint} simply states the fact that for a continuously differentiable function $f$ on $\R$ it follows from $f' \neq 0$ everywhere and $f'(a_0)>0$ for some $a_0\in\R$ that $f'>0$ everywhere on $\R$.
\end{remark}

\section{Monotonicity of primary matrix functions}
In this section we consider a \emph{primary matrix function} $f$ on the set $\Symn$ of symmetric matrices. Such a function is defined as follows\footnote{For a more general definition of primary matrix functions for non-symmetric arguments we refer to \cite[Ch. 6.2]{horn1994topics}.}:
Let $I$ be an open interval in $\R$ and let $f\in C^1(I)$. We denote\footnote{Note that $S_\R=\Symn$ and $S_{\R^+}=\PSymn$.} by $S_I$ the set of symmetric matrices with no eigenvalues outside $I$:
\[S_I := \{M\in\Symn \setvert \lambda(M)\subset I\}\,,\]
where $\lambda(M)\in\Rn$ is the ordered vector of the $n$ (not necessarily distinct) eigenvalues of $M$.
Then the primary matrix function $f:S_I\to\Symn$ is defined by
\[
	f(A) \eqq f(Q^T \fulldiag \,Q) \eqq Q^T \diag(f(\lambda_1),\dotsc,f(\lambda_n))\,Q \eqq Q^T \emptydmatr{f(\lambda_1)}{\ddots}{f(\lambda_n)}\,Q\,,
\]
where $A = Q^T \fulldiag \,Q = Q^T \fullmatr Q$, $Q\in\On$, is any orthogonal diagonalization of $A$. 
Furthermore we denote by $\innerproduct{X,Y}=\tr(X^TY)=\sum_{i,j=1}^n X_{i,j}Y_{i,j}$ the canonical inner product on $\Symn$.

\subsection[Analytic primary matrix functions on $\Symn$ and $\PSymn$]{Analytic primary matrix functions on \boldmath$\Symn$ and $\PSymn$}
For now we assume that $f\in\analytic(\R)$, where $\analytic(\R)$ is the set of analytic functions on $\R$. The more general case will be considered later on.\\
For readability reasons all lemmas, propositions and proofs will be stated for the case $f:\R\to\R$ and, correspondingly, $f:\Symn\to\Symn$. The restriction to the set of positive definite matrices (or even, for some open interval $I\subset\R$, the (convex) set $S_I$ of symmetric matrices $A$ with $\lambda(A)\subset I$) allows for nearly identical proofs.

The following lemma is stated in \cite{mathias1994} in a more general form. The proof given there is based on the expansion of $f$ into a matrix power series: observe for example that, for $f(A)=A^2$,
\[
	(A+H)^2 = A^2 + AH + HA + H^2 \quad \Rightarrow \quad Df[A].H = AH+HA
\]
and hence
\[
	\innerproduct{Df[A].H,\, \Htilde} = \innerproduct{AH+HA,\, \Htilde} = \innerproduct{AH,\, \Htilde} + \innerproduct{HA,\, \Htilde} = \innerproduct{H,\, A\Htilde+\Htilde A} = \innerproduct{H,\, Df[A].\Htilde}
\]
for $A,H,\Htilde\in\Symn$, thus $Df[A]$ is self-adjoint with respect to the canonical inner product on $\Rnn$. Similarly, the derivative of $A\mapsto A^k$ is self-adjoint for all $k\in\N$, from which one can show that the derivative of an analytic matrix function $f(A) = \sum_{k=1}^\infty \alpha_k \cdot A^k$ is self-adjoint as well.
\begin{lemma}
\label{lemma:primarySelfAdjoint}
Let $f\in \analytic(\R)$. Then the derivative of $f:\Symn\to\Symn$ is self-adjoint with respect to the canonical inner product on $\Symn$:
\begin{equation}
	\innerproduct{Df[A].H,\,\Htilde} = \innerproduct{H,\,Df[A].\Htilde} \quad \forall A,H,\Htilde\in\Symn\,.
\end{equation}
\end{lemma}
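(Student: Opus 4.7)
The plan is to follow the power-series route sketched in the paragraph preceding the lemma. First, I would establish the monomial case $f_k(A)=A^k$. Expanding $(A+H)^k$ and collecting the terms linear in $H$ yields the Fréchet derivative
\[
	Df_k[A].H \eqq \sum_{j=0}^{k-1} A^j\, H\, A^{k-1-j}\,.
\]
To see that this is self-adjoint on $\Symn$, I would write out $\innerproduct{Df_k[A].H,\Htilde}$ as a sum of traces, use the cyclic property of the trace together with the symmetry of every power $A^j$ (which follows from $A\in\Symn$), and finally reindex the summation $j\mapsto k-1-j$ to recover $\innerproduct{H,Df_k[A].\Htilde}$. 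This is the only concrete computation in the proof and it is short.

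Next, I would extend the result from monomials to analytic functions. Since $f\in\analytic(\R)$, for each fixed $A\in\Symn$ the spectrum $\lambda(A)$ lies in a compact interval around which $f$ admits a convergent power series $f(x)=\sum_{k\geq 0}\alpha_k(x-x_0)^k$ with some positive radius of convergence $\varrho$. In a neighbourhood of $A$ in operator norm we therefore have the absolutely convergent representation $f(B)=\sum_{k\geq 0}\alpha_k(B-x_0\id)^k$, and the standard estimate $\norm{(B-x_0\id)^k}\leq\norm{B-x_0\id}^k$ allows termwise differentiation, giving
\[
	Df[A].H \eqq \sum_{k\geq 1}\alpha_k\,Df_k[A-x_0\id].H\,.
\]
By the first step each summand is self-adjoint on $\Symn$, and since the subspace of self-adjoint operators on the finite-dimensional Hilbert space $\Symn$ is closed, the limit $Df[A]$ is itself self-adjoint.

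The main obstacle, such as it is, lies not in the monomial identity (which is a direct trace manipulation) but in the justification of termwise differentiation of the matrix power series; one has to be a little careful that the local radius of convergence of $f$ around $x_0$ dominates the operator-norm radius in which $A$ is varied. This is routine in the holomorphic functional calculus, and an alternative shortcut is available via the Daleckii--Krein formula: diagonalising $A=Q^T\fulldiag\,Q$ one has
\[
	Df[A].H \eqq Q^T\bigl(L_f(\lambda)\odot(QHQ^T)\bigr)Q\,,
\]
where $L_f(\lambda)$ is the symmetric Loewner matrix with entries $\bigl(f(\lambda_i)-f(\lambda_j)\bigr)/(\lambda_i-\lambda_j)$ on off-diagonal positions and $f'(\lambda_i)$ on the diagonal. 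Self-adjointness of $Df[A]$ with respect to the canonical inner product is then immediate from the symmetry of $L_f(\lambda)$ and the fact that the Hadamard product is entrywise; I would mention this as a convenient cross-check even if the proof proper is written in the power-series style of \cite{mathias1994}.
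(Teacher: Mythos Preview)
Your argument is correct, but it is not the proof the paper gives. The paper explicitly mentions (in the paragraph preceding the lemma) that the power-series expansion is the route taken in \cite{mathias1994}, and then deliberately chooses a different one: it invokes the resolvent integral representation
\[
	Df[A].H \eqq \intfrac\int_\Gamma f(z)\,(z\id-A)\inv\,H\,(z\id-A)\inv\,\dz
\]
from \cite[(6.6.2)]{horn1994topics} and verifies self-adjointness by a single chain of equalities, moving $\Htilde$ across the complex inner product under the integral and using only that $H$, $\Htilde$ and $(z\id-A)\inv$ are symmetric. No series convergence or termwise differentiation is needed; the contour calculus absorbs all of that in one stroke, and the argument works verbatim for $f$ merely holomorphic near the spectrum. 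Your power-series proof, by contrast, is more elementary (no complex analysis) and makes the mechanism transparent at the monomial level, at the cost of the bookkeeping you flag concerning local radii of convergence. The Daleckii--Krein remark you add is a genuine third alternative and is morally the same computation the paper later leans on in the non-analytic case via \cite{brown2000calculus}; it would make a fine standalone proof but is not what the paper does here.
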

\begin{proof}
We use an integral formula given in \cite[(6.6.2)]{horn1994topics}:
\[
	DF[A].H = \intfrac \int_\Gamma f(z) (z\id-A)\inv \,H\, (z\id-A)\inv\,\dz
\]
for $A\in\Symn$, where $\gamma$ is a closed curve in $\C$ such that every eigenvalue of $A$ has winding number $1$.\\
For $H,\Htilde\in\Symn$ we compute
\begin{align}
	\innerproduct{DF[A].H,\,\Htilde} &= \innerproduct{\intfrac\int_\Gamma f(z)\,(z\id-A)\inv \,H\, (z\id-A)\inv \, \dz,\: \Htilde}\nnl
	&= \intfrac\int_\Gamma f(z)\,\cproduct{(z\id-A)\inv \,H\, (z\id-A)\inv,\: \Htilde}\,\dz\nnl
	&= \intfrac\int_\Gamma f(z)\,\cproduct{H,\: (z\id-A)^{-*} \,\Htilde\, (z\id-A)^{-*}}\,\dz\nnl
	&= \intfrac\int_\Gamma f(z)\,\cproduct{\Big((z\id-A)^{-*} \,\Htilde\, (z\id-A)^{-*}\Big)^*,\:H^*}\,\dz\nnl
	&= \intfrac\int_\Gamma f(z)\,\cproduct{\Big((z\id-A)^{-*} \,\Htilde\, (z\id-A)^{-*}\Big)^*,\:H^*}\,\dz\label{eq:integralOne}\\
	&= \intfrac\int_\Gamma f(z)\,\cproduct{(z\id-A)^{-T} \,\Htilde\, (z\id-A)^{-T},\:H}\,\dz\label{eq:integralTwo}\\
	&= \intfrac\int_\Gamma f(z)\,\cproduct{(z\id-A)\inv \,\Htilde\, (z\id-A)\inv,\:H}\,\dz\label{eq:integralThree}\\
	&= \cproduct{\intfrac\int_\Gamma f(z)\,(z\id-A)\inv \,\Htilde\, (z\id-A)\inv\,\dz,\:H} = \innerproduct{DF[A].\Htilde,\:H} = \innerproduct{H,\:DF[A].\Htilde}\,,\nonumber
\end{align}
where equality of \eqref{eq:integralOne} and \eqref{eq:integralTwo} holds due to $H$ and $\Htilde$ being real and symmetric while the symmetry of $(z\id-A)$ implies \eqref{eq:integralThree}.
\end{proof}

This lemma can now be used to obtain some interesting properties of primary matrix functions and their derivatives.

\begin{proposition}
\label{prop:specmonToHilmonAna}
Let $f\in \analytic(\R)$ with $f'(t)>0$ for all $t\in\R$. Then the primary matrix function $f:\Symn\to\Symn$ is Hilbert space monotone.
\end{proposition}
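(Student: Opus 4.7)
The plan is to apply Lemma \ref{lemma:posDefSinglePoint} with $M=\Symn$: self-adjointness of $Df[A]$ at every $A\in\Symn$ is already handed to us by Lemma \ref{lemma:primarySelfAdjoint}, so only two items remain, namely (i) positive definiteness of $Df[A_0]$ at some convenient single point $A_0$, and (ii) invertibility of $Df[A]$ at every $A\in\Symn$. The lemma then delivers \emph{strict} monotonicity, which is \emph{a fortiori} the desired Hilbert space monotonicity.

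For (i) I would take $A_0 = 0$. Along the curve $t\mapsto tH$ with $H\in\Symn$, each eigenvalue of $tH$ is simply $t$ times an eigenvalue of $H$, so the definition of the primary matrix function together with a first-order expansion of the scalar $f$ about $0$ yields $f(tH) = f(0)\id + t\,f'(0)\,H + O(t^2)$; in particular $Df[0].H = f'(0)\,H$, whence $\innerproduct{Df[0].H,\,H} = f'(0)\,\norm{H}^2 > 0$ whenever $H\neq 0$.

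For (ii), fix $A\in\Symn$ with orthogonal diagonalization $A = Q^T\fulldiag\,Q$. Conjugation by $Q$ is an isometry of $(\Symn,\innerproduct{\cdot,\cdot})$ that intertwines $Df[A]$ with $Df[\fulldiag]$, so it suffices to show invertibility on diagonal matrices. I would then evaluate the contour integral representation from the proof of Lemma \ref{lemma:primarySelfAdjoint} entry by entry: since $(z\id - \fulldiag)\inv$ is itself diagonal with entries $(z-\lambda_i)\inv$, the integrand has $(i,j)$-component $H_{ij}\,f(z)/((z-\lambda_i)(z-\lambda_j))$, whose residues at $z=\lambda_i$ assemble, via partial fractions and Cauchy's formula for derivatives, into the classical Daletskii-Krein identity
\[
	(Df[\fulldiag].H)_{ij} \eqq f^{[1]}(\lambda_i,\lambda_j)\cdot H_{ij}\,,
\]
where $f^{[1]}(\lambda,\mu) = (f(\lambda)-f(\mu))/(\lambda-\mu)$ for $\lambda\neq\mu$ and $f^{[1]}(\lambda,\lambda) = f'(\lambda)$. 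Since $f'>0$ forces $f$ to be strictly increasing, every divided difference $f^{[1]}(\lambda_i,\lambda_j)$ is strictly positive, so $Df[\fulldiag]$ acts diagonally in the canonical orthonormal basis of $\Symn$ with strictly positive eigenvalues and is therefore invertible.

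I expect the main technical obstacle to be step (ii): turning the contour integral from Lemma \ref{lemma:primarySelfAdjoint} into the entrywise Daletskii-Krein formula rigorously, including the separate treatment of coincident eigenvalues. Once this computation is in hand, both hypotheses of Lemma \ref{lemma:posDefSinglePoint} are verified and the proposition follows at once.
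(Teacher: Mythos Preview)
Your proof is correct and follows the same global architecture as the paper --- invoke Lemma~\ref{lemma:primarySelfAdjoint} for self-adjointness, then feed everything into Lemma~\ref{lemma:posDefSinglePoint} --- but the two proofs diverge on how invertibility of $Df[A]$ is established.

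The paper's argument for (ii) is a one-liner: since $f'>0$ on $\R$, the scalar function $f$ is a diffeomorphism onto its image, so the primary matrix function $f^{-1}$ furnishes a differentiable inverse to $f$ on $\Symn$; by the chain rule $Df[A]$ is then invertible for every $A$. No computation of $Df[A]$ is needed. Your route instead works out the Daletskii--Krein formula from the contour integral and reads off invertibility from the strict positivity of the divided differences $f^{[1]}(\lambda_i,\lambda_j)$. Both are valid; the paper's approach is slicker, yours is more explicit.

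One observation worth making: your step (ii) in fact proves more than invertibility. Once you have $(Df[\fulldiag].H)_{ij} = f^{[1]}(\lambda_i,\lambda_j)\,H_{ij}$ with all $f^{[1]}(\lambda_i,\lambda_j)>0$, the computation
\[
	\innerproduct{Df[\fulldiag].H,\,H} \;=\; \sum_{i,j} f^{[1]}(\lambda_i,\lambda_j)\,H_{ij}^2 \;>\; 0 \quad (H\neq 0)
\]
shows $Df[A]$ is \emph{positive definite} at every $A$, not merely invertible. This renders both step (i) and the appeal to Lemma~\ref{lemma:posDefSinglePoint} superfluous: you have directly verified the sufficient condition \eqref{eq:strongStrictPositiveDefiniteness} for strict monotonicity. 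So while your proof is structured around Lemma~\ref{lemma:posDefSinglePoint}, the Daletskii--Krein computation you carry out is actually a self-contained proof on its own.

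A minor cosmetic difference: the paper chooses $A_0=\id$ rather than $A_0=0$, but the calculation is identical up to replacing $f'(0)$ by $f'(1)$.
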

\begin{proof}
According to Lemma \ref{lemma:primarySelfAdjoint}, the derivative $Df[A]$ is self-adjoint with respect to the inner product $\innerproduct{\cdot,\cdot}$ for every $A\in\Symn$. Thus we can apply Lemma \ref{lemma:posDefSinglePoint} if we show that $DF[A]$ is invertible everywhere and positive definite in one point.\\
Let $0\neq H\in\Symn,\: H=Q^T\diag(h_1,\dotsc,h_n)Q$. Then the derivative of $f$ at $\id\in\Symn$ is
\begin{align*}
	Df[\id].H &= \lim_{t\to 0}\:\tel{t}\:[f(\id+t\,H)-f(\id)]\\
	&= \lim_{t\to 0}\:\tel{t}\:Q^T[f(\id+\diag(th_1,\dotsc,th_n))-f(1)\id]Q\\
	&= \lim_{t\to 0}\:\tel{t}\:Q^T[f(\diag(1+th_1,\dotsc,1+th_n))-\diag(f(1),\dotsc,f(1))]Q\\
	&= \lim_{t\to 0}\:\tel{t}\:Q^T[\diag(f(1+th_1)-f(1),\dotsc,f(1+th_n)-f(1))]Q\\
	&= Q^T\diag(h_1f'(1),\dotsc,h_nf'(1)) \;=\; f'(1)\,H\,,
\end{align*}
thus
\begin{align*}
	\innerproduct{Df[\id].H,\,H} = f'(1)\,\innerproduct{H,\,H}  = f'(1)\,\norm{H}^2 \:>\:0
\end{align*}
because $f'>0$ and $H\neq0$ by assumption.\\
To see that $Df[A]$ is invertible for every $A\in\Symn$ we simply note that $f$ is invertible on $\R$ and the differentiable primary matrix function $f\inv:f(\Symn)\to\Symn$ is the inverse of $f$ on $\Symn$. Then for all $A\in\Symn$, the linear mapping $Df[A]$ must be invertible as well.
\end{proof}

The next lemma shows that every analytic primary matrix function can be represented as the gradient field (differentiated with respect to $U$) of an isotropic energy function satisfying the \emph{Valanis-Landel hypothesis} \cite{valanis1967} of additive separation\footnote{The Valanis-Landel hypothesis was introduced by K.C. Valanis and R.F. Landel in 1967 as an assumption on the elastic energy potential of incompressible materials \cite{valanis1967}. Their hypothesis was later found to be in good agreement with the elastic behaviour of vulcanized rubber \cite{treloar1973elasticity}; D.F. Jones and L.R.G. Treloar concluded that \enquote{the hypothesis is valid over the range covered} in their experiments, \enquote{namely $\lambda=0.189\textendash2.62_5$} \cite{jones1975}.}:
\[
	\boxed{W(U)=\sum_{i=1}^n F(\lambda_i(U))\,,}
\]
where $\lambda_i(U)$ is the $i$-th eigenvalue of $U$. This might be considered the \enquote{hidden assumption} underlying the theory of primary matrix functions.

\begin{proposition}
\label{prop:potentialAna}
Let $f\in \analytic(\R)$. Then for any $F\in \analytic(\R)$ with $F'=f$, the function
\begin{equation}
	W: \Symn \to \R\,, \quad W(A)=\sum_{i=1}^n F(\lambda_i(A)) = \tr F(A)
\end{equation}
is a \emph{potential} of $f:\Symn\to\Symn$, i.e.
\begin{equation}
	DW[A] = f(A)
\end{equation}
or, more precisely,
\begin{equation*}
	DW[A].H = \innerproduct{f(A),\,H}
\end{equation*}
for all $A,H\in\Symn$.
\end{proposition}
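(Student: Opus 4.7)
The plan is to compute $DW[A].H$ directly from the identity $W(A) = \tr F(A)$ using the analyticity of $F$, so that term-by-term differentiation of a power series is legal.

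First I would expand $F$ as a globally convergent power series $F(t)=\sum_{k=0}^\infty a_k t^k$, which gives $F(A)=\sum_{k=0}^\infty a_k A^k$ and consequently $W(A)=\sum_{k=0}^\infty a_k\,\tr(A^k)$. For each monomial $A\mapsto A^k$ the Leibniz rule yields
\[
	\frac{d}{dt}\bigg|_{t=0}(A+tH)^k \;=\; \sum_{j=0}^{k-1} A^j\, H\, A^{k-1-j},
\]
and applying the trace together with its cyclicity collapses the sum:
\[
	\frac{d}{dt}\bigg|_{t=0}\tr\bigl((A+tH)^k\bigr) \;=\; \sum_{j=0}^{k-1}\tr\bigl(A^j H A^{k-1-j}\bigr) \;=\; k\,\tr\bigl(A^{k-1} H\bigr).
\]
Summing over $k$ and recognising $\sum_k a_k k\,A^{k-1}=F'(A)=f(A)$ gives
\[
	DW[A].H \;=\; \sum_{k=1}^\infty a_k\, k\,\tr\bigl(A^{k-1}H\bigr) \;=\; \tr\bigl(f(A)\,H\bigr) \;=\; \innerproduct{f(A),\,H},
\]
which is the claimed identity.

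The only nontrivial point is the exchange of the derivative with the infinite sum, and this is where I would be most careful. Since $F\in\analytic(\R)$, the power series has infinite radius of convergence, so it converges locally uniformly in the Frobenius norm on any compact neighbourhood of $A$ in $\Symn$; the termwise differentiated series $\sum_k a_k\, k\, A^{k-1}$ inherits the same convergence behaviour. Standard facts on differentiation of uniformly convergent series (or, alternatively, appealing to the same Riesz–Dunford integral representation used in the proof of Lemma \ref{lemma:primarySelfAdjoint}, then pulling the trace inside the contour integral) justify the manipulation. The restriction to the set $S_I$ of symmetric matrices with spectrum in a prescribed open interval $I$ requires no change, since $f$ is then analytic on $I$ and one argues on an open neighbourhood of each $A\in S_I$ in the usual way.
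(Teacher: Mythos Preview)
Your argument is correct and takes a different route from the paper's. The paper proceeds indirectly: it writes $DW[A].H = \innerproduct{DF[A].H,\id}$, invokes Lemma~\ref{lemma:primarySelfAdjoint} to swap this to $\innerproduct{H,DF[A].\id}$, and then computes $DF[A].\id = f(A)$ explicitly from the spectral decomposition of $A$. You instead differentiate $\tr(A^k)$ termwise and let trace cyclicity collapse the non-commutative Leibniz sum, so the self-adjointness lemma is bypassed entirely; this is more direct and makes the mechanism (cyclicity of the trace) transparent. One technical caution: real analyticity on $\R$ does \emph{not} force the Taylor series at a single centre to have infinite radius of convergence (take $F(t)=\arctan t$), so your global power-series step as written really needs $F$ entire. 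Your parenthetical Riesz--Dunford alternative handles the general case cleanly: from $DF[A].H=\intfrac\int_\Gamma F(z)(z\id-A)^{-1}H(z\id-A)^{-1}\,\dz$, trace cyclicity and the identity $\intfrac\int_\Gamma F(z)(z\id-A)^{-2}\,\dz=F'(A)=f(A)$ give $\tr(DF[A].H)=\innerproduct{f(A),H}$ directly, so the argument stands.
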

\begin{proof}
Let $A,H\in\Symn$. Since $DW[A].H$ is the partial derivative of $W$ in direction $H$ at the point $A$ we find
\begin{equation*}
	DW[A].H = \lim_{t\to 0} \:\tel{t}\: [W(A+tH)-W(A)]\,.
\end{equation*}
We choose $Q\in\On$ such that $A=Q^TDQ$, where
$D=\fulldiag$,\: $\lambda_i$ denoting the eigenvalues of $A$, and compute
\begin{align*}
	DW[A].H &= \lim_{t\to 0} \:\tel{t}\: [W(A+tH)-W(A)] = \lim_{t\to 0} \:\tel{t}\:[\tr F(A+tH) - \tr F(A)]\\
	&= \lim_{t\to 0} \:\tel{t}\:\innerproduct{F(A+tH)-F(A),\,\id} = \innerproduct{\lim_{t\to 0} \:\tel{t}\:[F(A+tH)-F(A)],\,\id} = \innerproduct{DF[A].H,\,\id}\,.
\end{align*}
According to Lemma \ref{lemma:primarySelfAdjoint}, the total derivative $DF[A]$ is self-adjoint with respect to $\innerproduct{\cdot,\cdot}$ and thus
\begin{equation*}
	\innerproduct{DF[A].H,\,\id} = \innerproduct{H,\,DF[A].\id}\,.%
\end{equation*}
We find
\begin{align*}
	DF[A].\id &= \lim_{t\to 0} \:\tel{t}\: [F(A+t\,\id)-F(A)] \:=\: \lim_{t\to 0} \:\tel{t}\: [F(Q^TDQ+t\,\id)-F(Q^TDQ)]\\
	&= \lim_{t\to 0} \:\tel{t}\: [F(Q^T(D+t\,\id)Q)-Q^TF(D)Q]\\
	&= \lim_{t\to 0} \:\tel{t}\: [Q^TF(D+t\,\id)Q-Q^TF(D)Q]\\
	&= \lim_{t\to 0} \:\tel{t}\: [Q^TF(\fulldiag+t\,\id)Q-Q^TF(\fulldiag)Q]\\
	&= \lim_{t\to 0} \:\tel{t}\: Q^T[F(\diag(\lambda_1+t,\dotsc,\lambda_n+t))-F(\fulldiag)]Q\\
	&= \lim_{t\to 0} \:\tel{t}\: Q^T\Big[\diag\Big(F(\lambda_1+t)-F(\lambda_1),\dotsc,F(\lambda_n+t)-F(\lambda_n)\Big)\Big]Q\\
	&= Q^T\Big[\diag\Big(\lim_{t\to 0} \:\tel{t}\: F(\lambda_1+t)-F(\lambda_1),\dotsc,\lim_{t\to 0} \:\tel{t}\: F(\lambda_n+t)-F(\lambda_n)\Big)\Big]Q\\
	&= Q^T\Big[\diag\Big(F'(\lambda_1),\dotsc,F'(\lambda_n)\Big)\Big]Q = Q^Tf(D)\,Q \:=\: f(A)
\end{align*}
and therefore
\[
	DW[A].H = \innerproduct{DF[A].H,\,\id} = \innerproduct{H,\,DF[A].\id} = \innerproduct{H,\,f(A)} = \innerproduct{f(A),\,H}\,.\qedhere
\]
\end{proof}

\begin{remark}[Pseudo-potential]
Using the fact that $Df$ is self-adjoint everywhere, we can also obtain the potential directly by using \cite[Lemma 3.28]{NeffCISMnotes}. For $A=Q^TDQ$, $D=\diag(\lambda_1,\dotsc,\lambda_n)$, $Q\in\On$ we find
\begin{align*}
	W(A) &= \int_0^1 \innerproduct{f(t\,A),\,A}\,\dt = \int_0^1 \innerproduct{Q^Tf(t\,D)Q,\,Q^TDQ}\,\dt\\
	&= \int_0^1 \innerproduct{f(t\,D),\,D}\,\dt = \int_0^1 \tr \matr{f(t\,\lambda_1)\lambda_1&&\\&\ddots&\\&&f(t\,\lambda_n)\lambda_n}\,\dt\\
	&= \sum_{i=1}^n \int_0^1 f(\lambda_i\,t)\,\lambda_i \,\dt = \sum_{i=1}^n \int_0^{\lambda_i} f(t) \,\dt \quad=\quad \sum_{i=1}^n F(\lambda_i) + C
\end{align*}
with $C=-n\, F(0)$.
\end{remark}

\subsection{The general case}
In this section we no longer require the function $f$ to be analytic. While the results are almost identical to those of the previous subsection, the more general proofs require them to be stated in a different order.\\
The first proposition shows that every continuously differentiable primary matrix function can be represented as the gradient field (differentiated with respect to $U$) of an isotropic energy function satisfying the \emph{Valanis-Landel hypothesis}.

\begin{proposition}
\label{prop:primaryPotential}
Let $f\in C^1(I)$. Then for $F\in C^2(I)$ with $F'=f$, the function
\begin{equation}
	W: S_I \to \R\,, \quad W(A)=\sum_{i=1}^n F(\lambda_i(A)) = \tr F(A)
\end{equation}
is a \emph{potential} of $f:S_I\to\Symn$, i.e.
\begin{equation}
	DW[A] = f(A)
\end{equation}
or, more precisely,
\begin{equation*}
	DW[A].H = \innerproduct{f(A),\,H}
\end{equation*}
for all $A\in S_I,\:H\in\Symn$.
\end{proposition}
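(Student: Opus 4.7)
The plan is to reduce the proposition to the matrix identity $DW[A].H = \tr(f(A)\,H)$, which is equivalent to the stated conclusion since $W = \tr\circ F$ (as primary matrix functions) and $\tr(f(A)\,H)=\innerproduct{f(A),H}$. The route used in Proposition \ref{prop:potentialAna}, which invoked the self-adjointness of $DF[A]$ from Lemma \ref{lemma:primarySelfAdjoint}, is not directly available here because that lemma rests on an integral formula valid only for analytic $f$. I would therefore first establish the identity for polynomial $F$, where the argument is algebraic and elementary, and then extend it to all $F\in C^2(I)$ by a uniform approximation argument on the relevant spectrum.

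For $F(x)=x^k$, the expansion
\[
(A+tH)^k = A^k + t\sum_{j=0}^{k-1} A^j H A^{k-1-j} + O(t^2)
\]
combined with the cyclic invariance of the trace immediately gives $\frac{d}{dt}\big|_{t=0} \tr(A+tH)^k = k\,\tr(A^{k-1}H) = \tr(F'(A)\,H)$. By linearity, $DW_P[A].H = \tr(P'(A)\,H)$ for every real polynomial $P$, where $W_P(A):=\tr P(A)$, which is the desired identity in the polynomial case.

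To pass to general $F\in C^2(I)$, I would fix $A_0\in S_I$, choose an open neighborhood $U\subset S_I$ of $A_0$, and select a compact interval $[a,b]\subset I$ containing $\lambda(B)$ for every $B\in U$; this is possible because eigenvalues depend continuously on the matrix. The Weierstrass approximation theorem applied to $f=F'$ produces polynomials $Q_n\to f$ uniformly on $[a,b]$; their antiderivatives $P_n$ (normalised by $P_n(a)=F(a)$) then satisfy $P_n\to F$ uniformly on $[a,b]$ as well. From the spectral formula $\tr(g(B)\,H)=\sum_i g(\lambda_i(B))\,(Q_B H Q_B^T)_{ii}$, together with the Cauchy–Schwarz bound $\sum_i |(Q_B H Q_B^T)_{ii}|\le\sqrt{n}\,\norm{H}$, one obtains $|\tr(g(B)\,H)|\le\sqrt{n}\,\norm{H}\,\norm{g}_{L^\infty[a,b]}$ for every continuous $g$. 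Applied to $g=P_n-F$ and $g=P_n'-f$, this yields $W_{P_n}\to W$ pointwise on $U$ and $DW_{P_n}[B].H=\tr(P_n'(B)\,H)\to\tr(f(B)\,H)$ uniformly in $B\in U$ (and uniformly in $H$ on bounded sets). The classical theorem on uniform convergence of derivatives then promotes $W$ to a $C^1$ function on $U$ with $DW[A_0].H = \tr(f(A_0)\,H) = \innerproduct{f(A_0),H}$, as required.

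The main obstacle I anticipate is justifying the interchange of the approximation limit with differentiation, since $W$ is not manifestly $C^1$ a priori. The uniform spectral estimate above, combined with the continuity of the primary matrix function $B\mapsto f(B)$ (which in turn follows from the continuity of the eigenvalues on $\Symn$), is exactly what is needed to make this interchange rigorous without invoking any deeper machinery such as the Daletskii–Krein integral formula.
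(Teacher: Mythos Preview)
Your argument is correct and self-contained, but it takes a genuinely different route from the paper. The paper does not argue via polynomial approximation at all: it simply invokes Lewis's theorem on derivatives of spectral functions, which states that for any symmetric $g\in C^1(I^n)$ the spectral function $A\mapsto g(\lambda(A))$ is differentiable with $DW[A]=Q^T\diag(\nabla g(\lambda))\,Q$; specialising to $g(\lambda)=\sum_i F(\lambda_i)$ gives $DW[A]=Q^T\diag(f(\lambda_1),\dotsc,f(\lambda_n))\,Q=f(A)$ in one line. Your approach instead first settles the polynomial case by cyclic invariance of the trace and then upgrades to $F\in C^2(I)$ via Weierstrass approximation of $f=F'$, together with the spectral estimate $|\tr(g(B)H)|\le\sqrt{n}\,\norm{H}\,\norm{g}_{L^\infty[a,b]}$ and the standard theorem on uniform convergence of gradients. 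The trade-off is that the paper's proof is a two-line corollary but imports nontrivial external machinery (Lewis's theorem has to handle the non-smoothness of individual eigenvalue functions at repeated eigenvalues), whereas your proof is elementary and avoids any spectral-function theory, at the cost of an explicit approximation step. Both arguments are valid; yours has the advantage of being completely self-contained within the paper's framework.
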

\begin{proof}
	This is a corollary to Theorem 1.1 in \cite{Lewis96derivativesof}, where it is shown that any \emph{spectral function} of the form $W(A) = g(\lambda(f))$ with a symmetric function $g\in C^1(I^n)$ is differentiable with
	\[
		DW[A] = Q^T\diag(\grad g(\lambda))Q
	\]
	for all $A\in I$ with $A=Q^T\diag(\lambda_1,\dotsc,\lambda_n)Q$. By putting $g(\lambda) = \sum_{i=1}^n F(\lambda_i)$ we find $\grad g(\lambda) = (F'(\lambda_1),\dotsc,F'(\lambda_n))$, and since $F'=f$ we obtain
	\[
		DW[A] = Q^T\diag(f(\lambda_1),\dotsc,f(\lambda_n))Q = f(A)\,.\qedhere
	\]
\end{proof}
The next lemma is due to Brown et al. \cite[Theorem 2.1]{brown2000calculus}. The proof can be found there.
\begin{lemma}
\label{lemma:primaryDifferentiability}
\pushQED{\qed}
	Let $f\in C^1(I)$. Then the primary matrix function $f:S_I\to\Symn$ is continuously differentiable on $S_I$.
\popQED
\end{lemma}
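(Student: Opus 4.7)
The plan is to reduce to the analytic case of the previous subsection via polynomial approximation. Fix a compact set $K \subset S_I$; since eigenvalues depend continuously on $A$, the spectra of matrices in $K$ lie in some compact subinterval $J \subset I$. By the Weierstrass approximation theorem applied to $f'$ on $J$, followed by integration, we obtain a sequence of polynomials $p_k$ with $\norm{p_k - f}_{C^1(J)} \to 0$.

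Each $p_k$ is entire, so by the reasoning of Lemma \ref{lemma:primarySelfAdjoint} the primary matrix function $p_k : S_I \to \Symn$ is smooth, and its derivative admits the Daleckii--Krein representation
\begin{equation*}
	Dp_k[A].H \;=\; Q^T\bigl(\Phi_{p_k} \odot (QHQ^T)\bigr)\, Q, \qquad A = Q^T\diag(\lambda_1,\dotsc,\lambda_n)Q,
\end{equation*}
where $\odot$ is the Hadamard product and $\Phi_{g}\in\Rnn$ has entries $(\Phi_{g})_{ij} := g^{[1]}(\lambda_i,\lambda_j)$ built from the divided differences $g^{[1]}(x,y) := \tfrac{g(x)-g(y)}{x-y}$, $x \neq y$, extended by $g^{[1]}(x,x):=g'(x)$. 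This formula can be derived from the Cauchy integral used in the proof of Lemma \ref{lemma:primarySelfAdjoint} via residue calculus. From the identity $g^{[1]}(x,y) = \int_0^1 g'(y + t(x-y))\,\dt$ one obtains
\begin{equation*}
	\sup_{x,y \in J}\bigl|p_k^{[1]}(x,y) - p_\ell^{[1]}(x,y)\bigr| \;\leq\; \norm{p_k' - p_\ell'}_{\infty,J} \;\longrightarrow\; 0,
\end{equation*}
so the maps $Dp_k[A]$ form a Cauchy sequence, uniformly in $A \in K$, converging to a continuous limit $L(A)$. Since $p_k(A) \to f(A)$ holds uniformly on $K$ as well, the classical theorem on uniformly convergent $C^1$-sequences yields continuous differentiability of $f$ on $K$ with $Df = L$; as $K$ was arbitrary and $C^1$ is a local property, $f \in C^1(S_I,\Symn)$.

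The main obstacle is checking that the Daleckii--Krein formula is well-defined at matrices with repeated eigenvalues, where the diagonalizing $Q$ is not unique: any two valid choices differ by an orthogonal matrix block-diagonal with respect to the eigenspace decomposition. Within such a block, however, all entries of $\Phi_{p_k}$ equal the single value $p_k'(\lambda)$, and between distinct blocks with eigenvalues $\lambda,\mu$ the entries equal the single value $p_k^{[1]}(\lambda,\mu)$. Consequently the Hadamard product commutes with block-rotations, and the right-hand side of the formula is independent of the choice of $Q$, which legitimises the entire argument.
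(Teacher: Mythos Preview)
The paper does not actually prove this lemma; it simply attributes it to \cite[Theorem 2.1]{brown2000calculus} and places a \qedsymbol\ at the end of the statement. Your argument, by contrast, is a genuine self-contained proof and is correct: you approximate $f$ in $C^1(J)$ by polynomials $p_k$, invoke the Daleckii--Krein divided-difference representation for $Dp_k[A]$, bound the differences $Dp_k[A]-Dp_\ell[A]$ uniformly over $A\in K$ via $\sup_{x,y\in J}\lvert p_k^{[1]}(x,y)-p_\ell^{[1]}(x,y)\rvert \le \norm{p_k'-p_\ell'}_{\infty,J}$, and then appeal to the standard theorem on uniform limits of $C^1$ sequences. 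Your closing paragraph correctly handles the only subtle point, namely that the Daleckii--Krein expression is independent of the diagonalising $Q$: since $\Phi_g$ is constant on each block associated to a repeated eigenvalue, Hadamard multiplication by $\Phi_g$ commutes with block-orthogonal conjugation, which is exactly the ambiguity in $Q$.

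One minor remark: you justify the Daleckii--Krein formula by saying it ``can be derived from the Cauchy integral used in the proof of Lemma \ref{lemma:primarySelfAdjoint} via residue calculus,'' but never carry this out. Since you apply the formula only to \emph{polynomials}, it would be cleaner (and would make the argument fully elementary) to verify it directly for monomials $A\mapsto A^m$ by expanding $D(A^m)[A].H=\sum_{j=0}^{m-1}A^jHA^{m-1-j}$ and diagonalising; the general polynomial case then follows by linearity. This removes any dependence on the contour-integral machinery of the analytic subsection.
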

According to Proposition \ref{prop:primaryPotential} every primary matrix function $f$ on $S_I$ corresponding to $f\in C^1(I)$ has a potential. Thus the derivative of $f$ on $S_I$, which exists due to \ref{lemma:primaryDifferentiability}, is self adjoint according to Schwarz' theorem.
\begin{proposition}
\label{prop:primarySelfAdjoint}
\pushQED{\qed}
Let $f\in C^1(I)$. Then the primary matrix function $f:S_I\to\Symn$ is differentiable on $S_I$ and its derivative $Df$ is self-adjoint with respect to the canonical inner product on $\Symn$:
\begin{equation*}
	\innerproduct{Df[A].H,\,\Htilde} = \innerproduct{H,\,Df[A].\Htilde} \quad \forall A\in S_I,\;H,\Htilde\in\Symn\,.\qedhere
\end{equation*}
\popQED
\end{proposition}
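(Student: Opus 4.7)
The plan is to string together the three ingredients already laid out immediately above the statement: Lemma~\ref{lemma:primaryDifferentiability} for differentiability of the primary matrix function, Proposition~\ref{prop:primaryPotential} for existence of a scalar potential, and the symmetry of mixed partials (Schwarz's theorem) for self-adjointness.

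In detail, I would first note that Lemma~\ref{lemma:primaryDifferentiability} gives $f \in C^1(S_I, \Symn)$, so the Fr\'echet derivative $Df[A]$ exists and depends continuously on $A \in S_I$. Next, pick any antiderivative $F \in C^2(I)$ of $f$; this is possible because $f \in C^1(I)$, and then Proposition~\ref{prop:primaryPotential} produces a potential $W \in C^1(S_I, \R)$ with
\[
	DW[A].H = \innerproduct{f(A),\,H} \quad \text{for all } A \in S_I,\; H \in \Symn.
\]
Since $f$ itself is $C^1$ on $S_I$, this identity promotes $W$ to a $C^2$ function on the open set $S_I \subset \Symn$.

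With $W$ being $C^2$, Schwarz's theorem on the equality of mixed partials tells us that the Hessian bilinear form $D^2 W[A]$ is symmetric in its two arguments. But differentiating the identity $DW[A].H = \innerproduct{f(A),H}$ once more in the direction $\Htilde$ yields
\[
	D^2 W[A](H, \Htilde) = \innerproduct{Df[A].\Htilde,\, H},
\]
and symmetry $D^2 W[A](H,\Htilde) = D^2 W[A](\Htilde, H)$ therefore reads
\[
	\innerproduct{Df[A].\Htilde,\,H} = \innerproduct{Df[A].H,\,\Htilde},
\]
which is precisely the self-adjointness of $Df[A]$ with respect to the canonical inner product on $\Symn$.

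There is no real obstacle here: the whole content of the proposition has been distributed over Lemma~\ref{lemma:primaryDifferentiability} and Proposition~\ref{prop:primaryPotential}. The only point that warrants a line of care is confirming that $W$ is genuinely $C^2$ (so that Schwarz applies), which is immediate from $F \in C^2(I)$ combined with the $C^1$-regularity of the primary matrix function $f$ on $S_I$.
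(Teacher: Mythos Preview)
Your proposal is correct and follows exactly the approach the paper takes: the text immediately preceding the proposition already lays out the argument---differentiability from Lemma~\ref{lemma:primaryDifferentiability}, existence of a potential from Proposition~\ref{prop:primaryPotential}, and self-adjointness from Schwarz's theorem---and you have fleshed this out faithfully. The extra care you take in checking that $W$ is genuinely $C^2$ (so that Schwarz applies) is a welcome clarification that the paper leaves implicit.
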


Proposition \ref{prop:primaryPotential} can also be used to show how the monotonicity of $f$ on $I\subset\R$ relates to the Hilbert space monotonicity of the primary matrix function $f$ on $S_I\subset\Symn$.

\begin{proposition}
\label{prop:specmonToHilmon}
Let $f\in C^1(I)$. Then the primary matrix function $f:S_I\to\Symn$ is Hilbert space monotone if and only if $f$ is monotone on $I$.
\end{proposition}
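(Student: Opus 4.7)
The forward direction ($\Rightarrow$) follows by specialising to scalar matrices. For any $\lambda,\mu\in I$ the matrices $\lambda\,\id,\mu\,\id$ lie in $S_I$ and satisfy $f(\lambda\,\id) = f(\lambda)\,\id$, so
\[
	\innerproduct{f(\lambda\,\id)-f(\mu\,\id),\:\lambda\,\id-\mu\,\id} \eqq n\,(f(\lambda)-f(\mu))(\lambda-\mu)\,.
\]
Hilbert space monotonicity of $f$ on $S_I$ therefore forces $(f(\lambda)-f(\mu))(\lambda-\mu)\geq 0$ for all $\lambda,\mu\in I$, i.e.\ monotonicity of $f:I\to\R$.

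For the converse ($\Leftarrow$) my plan is to exhibit the primary matrix function as the gradient of a \emph{convex} potential on the convex open set $S_I$, and then invoke the standard characterisation of convex $C^1$ functions via monotonicity of their gradient. Since $f\in C^1(I)$ is monotone, $f'\geq 0$ on $I$, so any antiderivative $F\in C^2(I)$ of $f$ satisfies $F''=f'\geq 0$ and is therefore convex on $I$. By Proposition~\ref{prop:primaryPotential}, the spectral function
\[
	W:S_I\to\R\,,\quad W(A) = \tr F(A) = \sum_{i=1}^n F(\lambda_i(A))
\]
is a $C^1$ potential of the primary matrix function $f$, with $DW[A].H = \innerproduct{f(A),H}$ for all $A\in S_I$ and $H\in\Symn$.

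It then remains to show that $W$ is convex on $S_I$. This is precisely the classical spectral convexity theorem of Davis (also used in \cite{Lewis96derivativesof}): if a permutation-invariant $g:I^n\to\R$ is convex, then the associated spectral function $A\mapsto g(\lambda_1(A),\dotsc,\lambda_n(A))$ is convex on $S_I$. Applied to the symmetric map $g(x_1,\dotsc,x_n)=\sum_{i=1}^n F(x_i)$, which is convex because $F$ is, this yields convexity of $W$. Since $W$ is $C^1$ and convex on the convex open set $S_I$ with gradient $A\mapsto f(A)$, the standard characterisation (obtained by adding the two supporting-hyperplane inequalities $W(B)\geq W(A)+\innerproduct{f(A),B-A}$ and $W(A)\geq W(B)+\innerproduct{f(B),A-B}$) gives Hilbert space monotonicity of $f:S_I\to\Symn$, as desired.

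The only non-routine ingredient is Davis's spectral convexity theorem, which I invoke as a black box; the rest of the argument is bookkeeping on top of Proposition~\ref{prop:primaryPotential}. A self-contained but more calculation-heavy alternative would be to bypass convexity altogether and show positive semi-definiteness of $Df[A]$ directly via the Daleckii-Krein formula: for $A = Q^T\fulldiag Q$ and any $H\in\Symn$ one obtains $\innerproduct{Df[A].H,H} = \sum_{i,j} (f^{[1]}(D))_{ij}\,(QHQ^T)_{ij}^2$, where $(f^{[1]}(D))_{ij}$ is the divided difference of $f$ at $\lambda_i,\lambda_j$ (equal to $f'(\lambda_i)$ if $\lambda_i=\lambda_j$). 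Monotonicity of $f$ makes each divided difference nonnegative, so $Df[A]\succeq 0$ on $S_I$; integrating $t\mapsto \innerproduct{f(B+t(A-B)),A-B}$ on $[0,1]$ then yields Hilbert space monotonicity.
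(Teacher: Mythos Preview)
Your argument is correct and follows essentially the same route as the paper: both use Proposition~\ref{prop:primaryPotential} to realise $f$ as the gradient of the spectral potential $W(A)=\tr F(A)$, invoke Davis's spectral convexity theorem to transfer convexity of $\sum_i F(\lambda_i)$ to convexity of $W$ on $S_I$, and then use the standard equivalence between convexity of a $C^1$ function and monotonicity of its gradient. The only cosmetic difference is that the paper runs the whole chain as an if-and-only-if (so the forward direction also passes through Davis), whereas you handle the forward direction separately by specialising to scalar matrices; your Daleckii--Krein alternative is likewise the approach the paper alludes to in the remark immediately following the proposition.
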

\begin{proof}
Choose an antiderivative $F\in C^2(I)$ of $f$ and define
\[W: S_I \to \R\,, \quad W(A)=\sum_{i=1}^n F(\lambda_i(A))\,.\]
According to Proposition \ref{prop:primaryPotential}, $W$ is a potential of $f$ on $S_I$, i.e.\ $DW[A]=f(A)$ for all $A\in I$. But then $f=DW$ is monotone on $S_I$ if and only if $W$ is convex on $S_I$. According to an extension of the Chandler Davis Theorem \cite[Corollary 2]{davis1957}, this is the case if and only if the function $\lambda\mapsto\sum_{i=1}^n F(\lambda_i)$ is convex on $S_I^n$, which in turn is the case if and only if $f=F'$ is monotone on $I$.
\end{proof}
\begin{remark}
It is possible to give a proof based on the eigenvalue formula given in \cite[Theorem 2.1]{brown2000calculus}. This might be useful to distinguish monotonicity and strict monotonicity as well as positive definiteness and positive semi-definiteness of $Df$.
\end{remark}
\begin{remark}
A very similar result is also given by Norris in \cite{norris2008eulerian} and \cite[Lemma 4.1]{norris2008higherDerivatives}, where it is shown that $Df[A]$ is self-adjoint and positive definite for all $A\in \PSymn$ if the function $f:\R^+\to\R$ has the following properties:
\[
	f\in C^\infty(\R^+)\,, \quad f(1)=0\,,\quad f'(1)=1\quad \text{ and } \quad f'(t)>0 \quad \text{ for all } t\in\R^+\,.
\]
Norris calls these functions \emph{strain measures}, based on a definition by Hill given in \cite[p. 459]{hill1970} and \cite[p. 14]{hill1978}, although Norris requires the derivative $f'$ to be strictly positive, whereas Hill admits functions which are simply monotone on $\R^+$ as well.
\end{remark}

\section{Additional remarks and applications}
\subsection{The exponential function and the logarithm}
Returning to the principal logarithm $\log$ on $\PSymn$ and its inverse, the matrix exponential $\exp$ on $\Symn$, we find that Proposition \ref{prop:specmonToHilmon} immediately shows that $\log$ and $\exp$ are monotone. Furthermore, both functions are diffeomorphisms, hence their derivatives $D\log[P]$ and $D\exp[S]$ for $S\in\Symn,\:P\in\PSymn$ are invertible as well. Since the monotonicity implies that $D\log[P]$ and $D\exp[S]$ are positive semi-definite, they are therefore positive definite, thus $\log$ and $\exp$ are strictly monotone as well.\\
For these two functions we can also compute some of the aforementioned properties directly: using a representation of $D\exp$ given in \cite[Ch. 10.2]{higham2008}, we find
\begin{align*}
	\innerproduct{D\exp[A].H,\,\Htilde} &= \innerproduct{\int_0^1 \exp(sA)\,H\,\exp((1-s)A)\,\ds,\,\Htilde}\\
	&= \int_0^1 \innerproduct{\exp(sA)\,H\,\exp((1-s)A),\,\Htilde} \,\ds = \int_0^1 \innerproduct{H,\,\exp(sA)^T\,\Htilde\,\exp((1-s)A)^T} \,\ds\\
	&= \int_0^1 \innerproduct{H,\,\exp(sA)\,\Htilde\,\exp((1-s)A)} \,\ds \:=\: \innerproduct{H,\,D\exp[A].\Htilde}
\end{align*}
for $A,H,\Htilde\in\Symn$, showing that $D\exp[A]$ is self-adjoint, as well as
\begin{align*}
	\innerproduct{D\exp(A).H,\,H} &= \innerproduct{\int_0^1 \exp(sA)\,H\,\exp((1-s)A)\,\ds,\,H}\\
	&= \int_0^1 \innerproduct{H\,\exp((1-s)A),\,\exp(sA)^T\,H} \,\ds = \int_0^1 \innerproduct{\underbrace{H\,\exp((1-s)A)\,H^T}_{\text{pos. semi-definite}},\:\underbrace{\exp(sA)}_{\in\PSymn}}\,\ds \:\geq\: 0\,,
\end{align*}
showing that $D\exp[A]$ is positive semi-definite.\\
For the matrix logarithm and $A\in\PSymn,\:H,\Htilde\in\Symn$ we use, again%
\footnote{A formula for the derivative $D\log[A].H$ in a direction $H$ for commuting $A$ and $H$ as well as some properties of derivatives of primary matrix functions in arbitrary directions can be found in much earlier works by H. Richter \cite{richter1949log,richter1950}; however, Richter did not give the more general formula used here.}%
, a representation formula given in \cite[Ch. 11.2]{higham2008} to find
\begin{align*}
	\innerproduct{D\log[A].H,\,\Htilde} &= \innerproduct{\int_0^1 (t(A-\id)+\id)\inv\,H\,(t(A-\id)+\id)\inv\,\ds,\,\Htilde}\\
	&= \int_0^1 \innerproduct{(t(A-\id)+\id)\inv\,H\,(t(A-\id)+\id)\inv,\,\Htilde}\,\ds\\
	&= \int_0^1 \innerproduct{H,\,(t(A-\id)+\id)^{-T}\,\Htilde\,(t(A-\id)+\id)^{-T}} \,\ds\\
	&= \int_0^1 \innerproduct{H,\,(t(A-\id)+\id)\inv\,\Htilde\,(t(A-\id)+\id)\inv} \,\ds \;=\; \innerproduct{H,\,D\log[A].\Htilde}\,,
\end{align*}
showing that $D\log[A]$ is self-adjoint, as well as
\begin{align*}
	\innerproduct{D\log(A).H,\,H} &= \innerproduct{\int_0^1 (t(A-\id)+\id)\inv\,H\,(t(A-\id)+\id)\inv\,\ds,\,H}\\
	&= \int_0^1 \innerproduct{H\,(t(A-\id)+\id)\inv,\,(t(A-\id)+\id)^{-T}\,H} \,\ds\\
	&= \int_0^1 \innerproduct{\underbrace{H\,(t(A-\id)+\id)\inv\,H^T}_{\text{pos. semi-definite}},\:\underbrace{(t(A-\id)+\id)\inv}_{\in\PSymn}}\,\ds \;\geq\; 0\,,
\end{align*}
showing that $D\log[A]$ is positive semi-definite.\\
Note, however, that the matrix exponential is not monotone on $\R^{n\times n}$ or $\GLpn$: for $\alpha\in\R$ we compute
\begin{align*}
	&\hspace{-14mm}\innerproduct{\exp\matr{0&-\alpha\\\alpha&0} - \exp\matr{0&\alpha\\-\alpha&0},\: \matr{0&-\alpha\\\alpha&0} - \matr{0&\alpha\\-\alpha&0}}\\[2mm]
	&= \innerproduct{\matr{\cos(\alpha)&-\sin(\alpha)\\\sin(\alpha)&\cos(\alpha)} - \matr{\cos(-\alpha)&-\sin(-\alpha)\\\sin(-\alpha)&\cos(-\alpha)},\: \matr{0&-2\alpha\\2\alpha&0}}\\[2mm]
	&= \innerproduct{\matr{0&-2\sin(\alpha)\\2\sin(\alpha)&0},\: \matr{0&-2\alpha\\2\alpha&0}} \:=\: 8\,\alpha\,\sin(\alpha)
\end{align*}
and thus, for $\alpha = \frac{3\pi}{2}$,
\[
	\innerproduct{\exp\matr{0&-\alpha\\\alpha&0} - \exp\matr{0&\alpha\\-\alpha&0},\: \matr{0&-\alpha\\\alpha&0} - \matr{0&\alpha\\-\alpha&0}} = 12\,\pi\,\sin\Big(\frac{3\pi}{2}\Big) = -12\,\pi \:<\: 0\,.
\]

\subsection{Application to stress response functions in nonlinear elasticity}
We consider the \emph{Hencky constitutive model}, induced by the isotropic Hencky energy function
\[
	\WH = \mu\,\norm{\dev_n \log V}^2 \,+\, \frac\kappa2 \, [\tr(\log V)]^2\,.
\]
In this constitutive model, the \emph{Kirchhoff stress} $\tau$ corresponding to the stretch $V$ is given by
\[
	\tau = 2\,\mu\,\dev_n\log V \,+\, \kappa\tr(\log V) \,\id\,.
\]
If $2\mu=\kappa$, this relation reduces to $\tau = 2\mu\log V$, thus the mapping $V\mapsto \tau(V)$ is strictly monotone on $\PSymn$ in this special case (also called the \emph{lateral contraction free} case). However, this monotonicity does not hold for arbitrary choices of $\mu,\kappa>0$. Moreover, the mapping $\log V \mapsto \tau$ of the \emph{true strain tensor} $\log V$ to the Kirchhoff stress $\tau$ is monotone (a property also called \emph{Hill's inequality} \cite{hill1970}), while the \emph{Cauchy stress response}
\[
	V \mapsto \sigma = \frac{1}{\det V} \, \tau = \frac{1}{\det V} \, \left(2\,\mu\,\log V \,+\, 2\,\tr(\log V) \,\id \right)
\]
as well as the mapping
\[
	\log V \mapsto \sigma = \frac{1}{e^{\tr (\log V)}} \, \left(2\,\mu\,\log V \,+\, 2\,\tr(\log V) \,\id \right)
\]
are not monotone, thus the Hencky model does not satisfy the true-stress-true-strain monotonicity condition \eqref{eq:jogInequality}.

\section{Different notions of monotonicity: a comparison}
We may distinguish three types of (strict) monotonicity:
\begin{itemize}
\item The \emph{Hilbert space monotonicity}%
\begin{equation}
	\innerproduct{f(B)-f(A),\: B-A} > 0 \qquad \forall A\neq B\in\Symn \,,\label{eq:hilbertmon}\tag{H-mon}
\end{equation}
\item the \emph{operator monotonicity}%
\begin{equation}
	B-A \text{ positive definite } \quad\Rightarrow\quad f(B)-f(A) \text{ positive definite,} \label{eq:opmon}\tag{O-mon}
\end{equation}
\item the \emph{spectral monotonicity} (or monotonicity of $f$ on $\R$)%
\begin{equation}
	b > a \quad\Rightarrow\quad f(b)>f(a)\qquad \forall\: a,b\in\R\,.\label{eq:specmon}\tag{S-mon}
\end{equation}
\end{itemize}
Furthermore we consider the following condition on $f$:
\begin{equation}
	\innerproduct{f(A+H)-f(A),\: H} > 0 \qquad \forall H\in\PSymn\,,\: A\in\Symn\,.\label{eq:posdefCondition}\tag{P-mon}
\end{equation}

\begin{proposition}
\emph{Only} the following implications hold:
\begin{equation}
\eqref{eq:opmon} \Rightarrow \eqref{eq:specmon} \Leftrightarrow \eqref{eq:hilbertmon} \Leftrightarrow \eqref{eq:posdefCondition}\,.
\end{equation}
\end{proposition}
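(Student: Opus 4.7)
The plan is to split the proof into (a) establishing the chain of implications as written and (b) exhibiting a single counterexample that refutes all remaining possibilities. Since \eqref{eq:opmon} does not appear on the right-hand side of any listed implication, it suffices for part (b) to refute \eqref{eq:specmon} $\Rightarrow$ \eqref{eq:opmon}; by the equivalences \eqref{eq:specmon} $\Leftrightarrow$ \eqref{eq:hilbertmon} $\Leftrightarrow$ \eqref{eq:posdefCondition}, the same example automatically rules out \eqref{eq:hilbertmon} $\Rightarrow$ \eqref{eq:opmon} and \eqref{eq:posdefCondition} $\Rightarrow$ \eqref{eq:opmon}.

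For part (a), I would first observe that \eqref{eq:specmon} $\Leftrightarrow$ \eqref{eq:hilbertmon} is the strict-inequality analogue of Proposition \ref{prop:specmonToHilmon}: the potential $W(A) = \tr F(A)$ provided by Proposition \ref{prop:primaryPotential} is strictly convex on $\Symn$ exactly when $(\lambda_1,\dotsc,\lambda_n) \mapsto \sum_{i=1}^n F(\lambda_i)$ is strictly convex (strict Chandler--Davis), which is equivalent to $F$ being strictly convex, hence to $f=F'$ being strictly increasing. The remaining implications fall out of an elementary scalar-identity trick. For \eqref{eq:hilbertmon} $\Rightarrow$ \eqref{eq:posdefCondition}, set $B := A + H$ for $H \in \PSymn$, so $B \neq A$ and \eqref{eq:hilbertmon} literally becomes \eqref{eq:posdefCondition}. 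For \eqref{eq:posdefCondition} $\Rightarrow$ \eqref{eq:specmon}, given $a < b$ in $\R$ I take $A = a\,\id$ and $H = (b-a)\,\id \in \PSymn$; since $f$ is a primary matrix function, $f(a\,\id) = f(a)\,\id$ and likewise for $b$, so \eqref{eq:posdefCondition} reads $n(b-a)(f(b) - f(a)) > 0$, whence $f(b) > f(a)$. The very same substitution yields \eqref{eq:opmon} $\Rightarrow$ \eqref{eq:specmon}: $(b-a)\,\id \in \PSymn$ forces $(f(b) - f(a))\,\id \in \PSymn$, so $f(b) > f(a)$.

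The main obstacle is producing an explicit counterexample for part (b). I would take $f(x) = x^3$, which is strictly increasing on all of $\R$ and therefore satisfies \eqref{eq:specmon}, and exhibit $A,B \in \Sym(2)$ with $B - A \in \PSym(2)$ but $B^3 - A^3 \notin \PSym(2)$. A convenient choice is $A = \diag(0, -10)$ and $B = \matr{1 & 2 \\ 2 & -5}$: then $B - A = \matr{1 & 2 \\ 2 & 5}$ has trace $6$ and determinant $1$, hence is positive definite, while a short computation gives $B^3 - A^3 = \matr{-11 & 50 \\ 50 & 839}$, whose top-left entry is negative, so it is not positive definite. (An alternative route is to invoke Löwner's theorem, which characterizes operator monotone functions on $\R$ via Pick-Nevanlinna integral representations and thereby excludes all non-affine polynomials.) This single counterexample rules out every implication not already listed and finishes the proof.
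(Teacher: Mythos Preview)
Your proposal is correct and follows essentially the same architecture as the paper: prove the cycle of implications among \eqref{eq:specmon}, \eqref{eq:hilbertmon}, \eqref{eq:posdefCondition} together with the one-way implication from \eqref{eq:opmon}, then give a counterexample to rule out the converse. The differences are cosmetic. First, the paper routes \eqref{eq:opmon} through \eqref{eq:posdefCondition} (using that $\innerproduct{M,N}>0$ for $M,N\in\PSymn$) rather than directly to \eqref{eq:specmon} via scalar multiples of the identity as you do; both are one-line arguments. Second, for the counterexample the paper simply cites $A\mapsto A^2$ on $\PSymn$ from Bhatia, whereas you produce $f(x)=x^3$ on $\Symn$ with an explicit $2\times 2$ computation; your choice has the minor advantage of working on all of $\Symn$ (where $x\mapsto x^2$ is not even spectrally monotone), and your numbers check out. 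Finally, you are slightly more careful than the paper in flagging that Proposition~\ref{prop:specmonToHilmon} is stated for non-strict monotonicity and that a strict version is needed here; the paper just cites the proposition and relegates the strict/non-strict distinction to a remark.
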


\begin{proof}~\\
$\eqref{eq:opmon} \Rightarrow \eqref{eq:posdefCondition}$: For given $H\in\PSymn$ choose $B=A+H$. Since $B-A=H$ is positive definite, \eqref{eq:opmon} implies that $f(B)-f(A)$ is positive definite as well. Thus \[\innerproduct{\underbrace{f(A+H)-f(A)}_{\in\PSymn},\: \underbrace{H}_{\in\PSymn}} > 0\,.\]
$\eqref{eq:posdefCondition} \Rightarrow \eqref{eq:specmon}$: Let $f$ satisfy condition \eqref{eq:posdefCondition}. Then, with $A=a\,\id,\,H=h\,\id,\,h\in\R^+$ we find
\begin{align*}
	0 &< \innerproduct{f(a\,\id+h\,\id)-f(a\,\id),\: h\,\id} = h\,\innerproduct{f((a+h)\,\id)-f(a\,\id),\: \id}\\
	&= h\,\tr(f((a+h)\,\id)-f(a\,\id)) = h\,\tr(f(a+h)\,\id-f(a)\,\id) \:=\; h\,n\,(f(a+h)-f(a))\,.
\end{align*}
For $a,b\in\R$ with $b>a$, choose $h=b-a$. Then $n(b-a)(f(b)-f(a))>0$ and thus $f(b)>f(a)$.\\
$\eqref{eq:specmon} \Rightarrow \eqref{eq:hilbertmon}$: Proposition \ref{prop:specmonToHilmon}\\
$\eqref{eq:hilbertmon} \Rightarrow \eqref{eq:posdefCondition}$: This implication is trivial; simply choose $B=A+H$.\\
To see that the operator monotonicity is not implied by the other conditions, consider the function $\PSymn\to\Symn,\; A\mapsto A^2$. While this function is monotone in the sense of \eqref{eq:hilbertmon} and \eqref{eq:specmon}, it is not operator monotone \cite[Example V.1.2]{bhatia1997matrix}.
\end{proof}
\begin{remark}
If $f$ is not a primary matrix function given through a scalar function on the spectrum (and \eqref{eq:specmon} is therefore not well defined), then the only generally true implications are
\begin{equation}
	\eqref{eq:opmon} \Rightarrow \eqref{eq:posdefCondition} \Leftarrow \eqref{eq:hilbertmon}\,.
\end{equation}
To see that operator monotonicity does not imply Hilbert space monotonicity in this general case, consider the function
\begin{equation}
	g: \PSymn\to\PSymn, \quad C\mapsto \det(C)\cdot\id\,. \label{eq:opMonNotHMonExample}
\end{equation}
For $A,H\in\PSymn$ we find
\[
	Dg[C].H \:=\: \ddt\eval{\det(C+tH)}{t=0} \cdot \id \:=\: \underbrace{\det(C)}_{>0\vphantom{\PSymn}}\tr(\,\underbrace{H\vphantom{()}}_{\mathclap{\in\PSymn}}\overbrace{C\inv}^{\mathclap{\in\PSymn}}) \cdot \id\,.
\]
Since $\tr(MN)>0$ for $M,N\in\PSymn$ (c.f. \cite{Neff_Diss00}) we find $\det(C)\tr(HC\inv)>0$. Thus $\det(C)\tr(HC\inv)\, \id$ is a positive definite matrix:
\[
	Dg[C].H = \det(C)\tr(HC\inv) \, \id \in \PSymn\,,
\]
hence $g$ is operator monotone. However $g$ is not Hilbert space monotone: in the case $n=2$, with $A=\matrs{3&0\\0&2}$ and $B=\matrs{5&0\\0&1}$ we find
\begin{align*}
	\innerproduct{g(B)-g(A),\,B-A} &= (\det(B)-\det(A))\,\tr(B-A) = (5-6)\,\tr\matrs{2&0\\0&-1} \:=\: -1 \:<\: 0\,.
\end{align*}
For arbitrary dimensions $n>2$ the same follows for
\[
	A=\matr{3&0&0&\dots&0\\0&2&0&\dots&\\0&0&1&\dots&\\\vdots&&&\ddots&\\0&&&&1} \qquad\text{and}\qquad B=\matr{5&0&0&\dots&0\\0&1&0&\dots&\\0&0&1&\dots&\\\vdots&&&\ddots&\\0&&&&1}\,.
\]
Note also that $Dg[C].H$ is generally not self-adjoint: for $H,\Htilde\in\Symn$ we find
\[
	\innerproduct{Dg[C].H,\,\Htilde} = \innerproduct{\innerproduct{\Cof C,\, H}\cdot\id,\: \Htilde} =  \innerproduct{\Cof C,\, H} \tr\Htilde \neq \innerproduct{\Cof C,\, \Htilde} \tr H\,.
\]
Thus \eqref{eq:opMonNotHMonExample} does not admit a potential.
\end{remark}

\section{Some observations on Jog's and Patil's calculus}
\label{section:jog}
Returning to our original motivation, namely the true-stress-true-strain inequality, we consider the equation
\[
	\pdd{\,\sigma(B)}{B} \cdot \pdd{B}{\log B} = \pdd{\,\sigmahat(\log B)}{\log B}
\]
based on the chain rule. To see how the positive definiteness of two of these terms imply the positive definiteness of the third we need the following lemma.
\begin{lemma}
\label{lemma:selfadjointProductPositiveDefinite}
Let $V$ be a finite-dimensional Hilbert space and let $A,B\in L(V,V)$ with
\begin{itemize}
	\item[i)] $A$ and $B$ are self-adjoint and positive definite,
	\item[ii)] $A B$ is self-adjoint.
\end{itemize}
Then $A B$ is positive definite.
\end{lemma}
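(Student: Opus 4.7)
The plan is to exploit the positive definite square root of $A$ to realize $AB$ as similar (via a congruence) to a manifestly positive definite self-adjoint operator. Since $A$ is self-adjoint and positive definite, it admits a unique self-adjoint positive definite square root $A^{1/2}$, which is invertible with inverse $A^{-1/2}$.

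First I would write the factorization
\[
    AB \eqq A^{1/2}\,\big(A^{1/2} B\, A^{1/2}\big)\, A^{-1/2}\,,
\]
which shows that $AB$ is similar to the operator $C := A^{1/2} B A^{1/2}$. Next I would verify that $C$ is self-adjoint and positive definite: self-adjointness is immediate from self-adjointness of $A^{1/2}$ and $B$, while for $v \in V \setminus \{0\}$ one has $\innerproduct{Cv,v} = \innerproduct{B A^{1/2} v, A^{1/2} v} > 0$ because $B$ is positive definite and $A^{1/2} v \neq 0$. Consequently the spectrum of $C$ consists of strictly positive real numbers.

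Since similarity preserves the spectrum, $AB$ also has strictly positive real eigenvalues. By hypothesis (ii), $AB$ is self-adjoint, so it is orthogonally diagonalizable with real eigenvalues, all of which we have just shown to be positive; therefore $AB$ is positive definite.

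I do not anticipate any serious obstacle: the only subtlety is that without the self-adjointness assumption (ii) one could only conclude that $AB$ has positive eigenvalues, which in general does not entail positive definiteness in the sense $\innerproduct{ABv,v}>0$. Hypothesis (ii) is exactly what bridges this gap, and the similarity argument makes this transparent.
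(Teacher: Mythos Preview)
Your proof is correct but follows a genuinely different route from the paper's. The paper first observes that hypothesis~(ii), together with the self-adjointness of $A$ and $B$, forces $AB=(AB)^*=B^*A^*=BA$, so $A$ and $B$ commute; they are therefore simultaneously orthogonally diagonalizable, and the product of the two diagonal matrices (each with positive entries) is again diagonal with positive entries. You instead use the square-root trick to exhibit $AB$ as similar to the manifestly positive definite operator $C=A^{1/2}BA^{1/2}$, and bring in hypothesis~(ii) only at the very end to pass from ``positive spectrum'' to ``positive definite''. Your argument has the merit of showing, along the way, that $AB$ has strictly positive eigenvalues even without assuming~(ii), which isolates precisely where~(ii) is needed; the paper's argument, by contrast, extracts the structural consequence of~(ii)---commutativity of $A$ and $B$---at the outset, which is a fact of independent interest. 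Both are short and standard; neither is clearly superior.
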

\begin{proof}
	Since $A$, $B$ and $A B$ are self-adjoint, we find \[A B = (A B)^T = B^T A^T = B A\,,\] hence $A$ and $B$ commute. Therefore $A$ and $B$ are simultaneously diagonalizable: we can choose an orthonormal basis such that the corresponding matrices $M_A$ and $M_B$ representing $A$ and $B$ are diagonal. Since $A$ and $B$ are positive definite, all diagonal entries of $M_A$ and $M_B$ are positive. The matrix $M_{AB}$ representing $AB$ in the same basis is given by $M_{AB} = M_A \cdot M_B$ and is therefore a diagonal matrix with positive diagonal entries as well, thus $AB$ is positive definite.\\
	Note that in the case of $V=\R^n$ we can simply choose $Q\in\On$ such that $A=Q^TD_A Q$, $B=Q^TD_BQ$ with diagonal matrices $D_A, D_B$. Then $AB=Q^TD_AD_B Q$, and since the diagonal entries of $D_A$ and $D_B$ are positive, so are the diagonal entries of $D_AD_B$.
\end{proof}
We consider the derivatives
\[
	\pdd{\,\sigma(B)}{B}\,,\quad \pdd{B}{\log B}\,,\quad \pdd{\,\sigmahat(\log B)}{\log B} \quad \in \: L(\Rnn,\,\Rnn)
\]
on $\Symn$ and make the following assumptions on the functions $\sigma$ and $\sigmahat$:
\begin{alignat}{2}
	&\pdd{\,\sigma(B)}{B} \quad &&\text{ is self-adjoint,}\label{eq:assumptions1}\\
	&\pdd{\,\sigmahat(\log B)}{\log B} \quad &&\text{ is self-adjoint and positive definite in $L(\Rnn,\,\Rnn)$.}\label{eq:assumptions2}
\end{alignat}
Furthermore, we know from the previous sections that $\pdd{B}{\log B} = D\exp[\log B]$ and its inverse $\left(\pdd{B}{\log B}\right)\inv = \pdd{\log B}{B}$  $(= D\log[B])$ are self-adjoint and positive definite.\\[2mm]
Then according to Lemma \ref{lemma:selfadjointProductPositiveDefinite} the following holds:
\[
	\pdd{\,\sigma(B)}{B} \cdot \pdd{B}{\log B} = \pdd{\,\sigmahat(\log B)}{\log B} \quad \Longrightarrow \quad \pdd{\,\sigma(B)}{B} \text{\: is positive definite.}
\]
This follows directly from $\pdd{\,\sigma(B)}{B} = \pdd{\,\sigmahat(\log B)}{\log B} \cdot \left(\pdd{B}{\log B}\right)\inv$. Note that \eqref{eq:assumptions1} and \eqref{eq:assumptions2} hold if $\sigma(B)=\sigmahat(\log B)$ for a primary matrix function $\sigmahat$ induced by a monotone function $f:\R^+\to\R$ with $f'(t)>0$ for all $t \in \R^+$.

To apply Lemma \ref{lemma:selfadjointProductPositiveDefinite}, all of the involved matrices $A$, $B$ and $AB$ must be self-adjoint.
While the term \enquote{positive definite} usually implies the symmetry by definition, we will now consider matrices $A$ which are \enquote{positive definite} in the sense that $\innerproduct{Ax,x} > 0$ for all $x\in\Rn$, which is the case if and only if the symmetric part $\sym A = \half(A+A^T)$ of $A$ is positive definite.\\
We will show that the lemma does not generally hold if only one of the considered matrices is symmetric. Let
\[
	A = \matr{1&0\\0&\tel8}\,, \qquad B_t = \matr{1&-t\\0&1}
\]
for $t\in[0,1]$. Then
\[
	A\cdot B_t = \matr{1&-t \\ 0&\tel8}
\]
and we find:
\begin{itemize}
	\item $t\mapsto B_t: [0,1]\to\Rnn$ is continuous,
	\item $A$ is invertible, symmetric and positive definite,
	\item $B_t$ is invertible and \enquote{positive definite} (i.e.\ $\sym B_t = \half(B_t+B_t^T)$ is positive definite) for all $t\in[0,1]$ and
	\item $A\cdot B_t$ is invertible for all $t\in[0,1]$.
\end{itemize}
However, while $A\cdot B_0$ is obviously positive definite, the matrix $A\cdot B_1 = \matr{1&-1\\0&\tel8}$ is not since
\[
	\sym \matr{1&-1\\0&\tel8} = \matr{1&-\half\\-\half&\tel8} \quad \Longrightarrow \quad \det(\sym(A\cdot B_1)) = -\tel8 \:<\: 0\,,
\]
which implies that $\sym(A\cdot B_1)$ is not positive definite.\\
This shows not only that the lemma does not hold for non-symmetric matrices, but also that a \enquote{positive definite} (non-symmetric) matrix can be continuously deformed into a non-positive matrix without losing invertibility along the way.

In \cite[eq. (50)]{Jog2013}, Jog and Patil argue that a tensor valued function $A$ \enquote{loses positive definiteness} if and only if $B$ \enquote{loses positive definiteness}, which is deduced from the fact that $A = B C$ for some symmetric positive definite $C$. Since $A$ and $B$ are not symmetric in general, the authors define \enquote{losing positive definiteness} as the loss of invertibility. While for this definition the stated equivalence is correct, it should be carefully noted that since invertibility of a gradient is not a sufficient condition for monotonicity, this result cannot be applied to show the monotonicity of a function with gradient $A$. In particular, if $\pdd{\,\sigmahat(\log B)}{\log B}$ is not symmetric we cannot simply combine Lemma \ref{lemma:posDefSinglePoint} and Lemma \ref{lemma:selfadjointProductPositiveDefinite} to conclude that $\pdd{\,\sigma(B)}{B}$ is positive definite.

\section*{Acknowledgements}
We thank Prof. Chandrashekhar S. Jog (Indian Institute of Science) for interesting discussions on the topic of constitutive inequalities as well as Prof. Karl-Hermann Neeb (University of Erlangen) and Prof. Nicholas Higham (University of Manchester) for their helpful remarks.

{\footnotesize

\begin{appendix}
\section{Appendix}
\subsection{On the derivative of the determinant function}
Consider the first order approximation
\[
	\det(A+H) = \det A + D\det[A].H + \underbrace{\dots}_{\mathclap{\substack{\text{higher order}\\\text{terms}}}}
\]
of the determinant function at a \emph{diagonal} matrix $A=\diag(a_1,\dotsc,a_n)$. First we assume that $H\in\Symn$ is an \emph{off-diagonal} matrix of the form
\begin{equation}
	H = H^{\mathrm{off}} = \matrs{0&&h\\&\ddots&\\h&&0} \label{eq:offDiagonalMatrixSpecialCase}
\end{equation}
with $h\in\R$. We compute
\begin{align}
	&\hspace{-14mm} \det\matr{\vec{a}_1 + \matrs{0\\\vdots\\h}, \: \vec{a}_2\:\:,\:\: \cdots \:\:\,, \:\: \vec{a}_n + \matrs{h\\\vdots\\0}}\nnl
	&= \det(\vec{a}_1\,, \: \vec{a}_2\,, \: \dotsc \,, \: \vec{a}_n + \matrs{h\\\vdots\\0}) \:+\: \det(\matrs{0\\\vdots\\h}, \: \vec{a}_2\,, \: \dotsc \,, \: \vec{a}_n + \matrs{h\\\vdots\\0})\nnl
	&= \det(\vec{a}_1\,, \: \vec{a}_2\,, \: \dotsc \,, \: \vec{a}_n) \:+\: \det(\vec{a}_1\,, \: \vec{a}_2\,, \: \dotsc \,, \: \vec{a}_n + \matrs{h\\\vdots\\0})\nnl
	&\quad \;\;+\: \det(\matrs{0\\\vdots\\h}, \: \vec{a}_2\,, \: \dotsc \,, \: \vec{a}_n) \:+\: \det(\matrs{0\\\vdots\\h}, \: \vec{a}_2\,, \: \dotsc \,, \: \matrs{h\\\vdots\\0})\,. \label{eq:detExtendedSum}
\end{align}
Since $A$ is diagonal by assumption, the column vector $\vec{a}_1$ has the form $\vec{a}_1 = \matr{a_{1}&\cdots&0}^T$ and $\vec{a}_n$ has the form $\vec{a}_n = \matr{0&\cdots&a_{n}}^T$. Therefore the vectors $\vec{a}_1$ and $\matr{h&\cdots&0}^T$ as well as $\vec{a}_n$ and $\matr{0&\cdots&h}^T$ are linearly dependent. Thus \eqref{eq:detExtendedSum} reduces to
\[
	\det(\vec{a}_1\,, \: \vec{a}_2\,, \: \dotsc \,, \: \vec{a}_n) \:+\: \det(\matrs{0\\\vdots\\h}, \: \vec{a}_2\,, \: \dotsc \,, \: \matrs{h\\\vdots\\0}) \:=\: \det A \:+\: \underbrace{\det(\matrs{0\\\vdots\\h}, \: \vec{a}_2\,, \: \dotsc \,, \: \matrs{h\\\vdots\\0})}_{:=R(h)}\,.
\]
The term $R(h)$ is quadratic in $h$, thus the linear approximation is simply%
\[
	D\det[A].H = 0\,.
\]
for a matrix $H\in\Symn$ of the form \eqref{eq:offDiagonalMatrixSpecialCase}. Through similar computations, it is easy to show that $D\det[A].H = 0$ for any off-diagonal $H\in\Symn$.

To find the derivative $D\det[A].\id$ we compute
\begin{align*}
	\det(A+h\id) &= \det\matrs{a_1+h & &0\\&\ddots &\\0&&a_n+h} = \prod_{i=1}^n (a_i+h) = \prod_{i=1}^n a_i +  \Big(\sum_{i=1}^n\prod_{\substack{j=1\\j\neq i}}^n a_j\Big)\cdot h + h^2\cdot[\dots]\,,
\end{align*} %
thus
\begin{equation}
	D\det[A].\id = \ddh \det(A+h\id) = \sum_{i=1}^n \prod_{\substack{j=1\\j\neq i}}^n a_j\,. \label{eq:detDerivativeInUnityDirection}
\end{equation}
For $n=3$ we obtain
\[
	D\det[A].\id = a_1 a_2 + a_2 a_3 + a_1 a_3\,.
\]
Furthermore, if we assume that $0$ is a simple eigenvalue of $A$ (which is the case for matrices of the form $D-\lambda_i(D)$ where $D$ is a diagonal matrix with simple eigenvalues; such matrices appeared in equation \eqref{eq:diagonalMinusSimpleEigenvalueExample}), then \eqref{eq:detDerivativeInUnityDirection} can be written as
\[
	D\det[A].\id = \prod_{\substack{j=1\\j\neq k}}^n a_j \neq 0\,,
\]
where $0$ is the $k$-th eigenvalue of $A$.

\subsection{On the derivative of isotropic functions}
\begin{lemma}
\label{lemma:isotropicFunctionDerivative}
Let $W:\Symn\to\R$ be an isotropic real valued function, i.e.
\[
	\boxed{W(Q^TXQ)=W(X) \quad \forall \: X\in\Symn\,,\:Q\in\On\,.}
\]
Then
\[
	DW[Q^TXQ] = Q^TDW[X]Q\,.
\]
\end{lemma}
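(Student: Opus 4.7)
The plan is to differentiate the isotropy identity $W(Q^TXQ)=W(X)$ and then use the orthogonal invariance of the Frobenius inner product to move the $Q$ factors off the increment, isolating the gradient.

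First, fix $Q\in\On$ and consider the linear map $\Phi_Q:\Symn\to\Symn$, $\Phi_Q(X)=Q^TXQ$. Linearity gives $D\Phi_Q[X].H=Q^THQ$ for every $H\in\Symn$, and the isotropy hypothesis rewrites as $W\circ\Phi_Q=W$. Applying the chain rule at $X$ in direction $H$,
\[
	DW[X].H \;=\; DW[\Phi_Q(X)].(D\Phi_Q[X].H) \;=\; DW[Q^TXQ].(Q^THQ)
\]
for every $H\in\Symn$.

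Next, identify the Fréchet derivative $DW[A]$ with its Riesz representative in $\Symn$ via $DW[A].H=\innerproduct{DW[A],\,H}$, which is the convention already used implicitly in the statement of the lemma. The identity above becomes
\[
	\innerproduct{DW[X],\,H}\;=\;\innerproduct{DW[Q^TXQ],\,Q^THQ}\,.
\]
By cyclicity of the trace, $\innerproduct{M,\,Q^THQ}=\tr(M^TQ^THQ)=\tr(QM^TQ^TH)=\innerproduct{QMQ^T,\,H}$ for any $M\in\Symn$, so the right-hand side equals $\innerproduct{Q\,DW[Q^TXQ]\,Q^T,\,H}$. Since $H\in\Symn$ is arbitrary, we conclude $DW[X]=Q\,DW[Q^TXQ]\,Q^T$, and multiplying by $Q^T$ on the left and $Q$ on the right yields the claimed formula $DW[Q^TXQ]=Q^T\,DW[X]\,Q$.

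I expect no genuine obstacle: the whole argument reduces to the chain rule together with the identity $\innerproduct{Q^TAQ,\,Q^TBQ}=\innerproduct{A,\,B}$ for $Q\in\On$. The one subtlety worth flagging is notational — $DW[A]$ appears in the statement as a symmetric matrix rather than as a linear functional on $\Symn$, and the Riesz identification must be made explicit before the conjugation by $Q$ can be transferred from $H$ to the gradient. Differentiability of $W$ is of course tacitly assumed.
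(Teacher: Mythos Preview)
Your proof is correct and follows essentially the same route as the paper: both differentiate the isotropy identity to obtain $\innerproduct{DW[Q^TXQ],\,Q^THQ}=\innerproduct{DW[X],\,H}$, then use orthogonal invariance of the Frobenius inner product to shift the conjugation from $H$ onto the gradient. The only cosmetic difference is that you invoke the chain rule for $W\circ\Phi_Q$ explicitly, whereas the paper writes out the first-order expansion directly; your added remark about the Riesz identification is a helpful clarification the paper leaves implicit.
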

\begin{proof}
We directly compute:
\begin{alignat*}{1}
	W(Q^T(X+H)Q) &= W(X+H)\\
	\Rightarrow W(Q^TXQ + Q^THQ) &= W(X) + \innerproduct{DW[X],\,H} + \dots\\
	\Rightarrow \smash{\overbrace{W(Q^TXQ)}^{=W(X)}} + \innerproduct{DW[Q^TXQ],\,Q^THQ} + \dots &= W(X) + \innerproduct{DW[X],\,H} + \dots\\
	\Rightarrow \innerproduct{DW[Q^TXQ],\,Q^THQ} &= \innerproduct{DW[X],\,H}\\
	\Rightarrow \innerproduct{QDW[Q^TXQ]Q^T,\,H} &= \innerproduct{DW[X],\,H}\,.
\end{alignat*}
Since this holds for all $H\in\Symn$, we obtain
\[
	QDW[Q^TXQ]Q^T = DW[X]
\]
and thus
\[
	DW[Q^TXQ] = Q^TDW[X]Q\,. \qedhere
\]
\end{proof}

\subsection{The eigenvalue function}
We could also try to prove Proposition \ref{prop:primaryPotential} for the more general case of non-analytic functions by directly computing the derivative of the function
\[
	W: \Symn\to\R,\quad W(A) = \sum_{i=1}^n F(\lambda_i(A))\,.
\]
Unfortunately, while the derivative of $W$ at a point $A\in\Symn$ in directions $H$ can be explicitly computed if $A$ and $H$ commute, it is difficult to do so for arbitrary choices of $H\in\Symn$.\\
One possible approach is to assume that the function $\lambda: \Symn\to\R^n$ mapping a matrix $M\in\Symn$ to its (ordered) eigenvalues $\lambda(M)$ is differentiable in a neighbourhood of $A\in\Symn$. For example, this is the case if all eigenvalues of $A$ are simple \cite{Magnus1985eigen}. The basic idea is to write $W(A) = \Psi(\lambda(A))$ with $\Psi(\lambda_1,\dotsc,\lambda_n)=\sum_{i=1}^n F(\lambda_i)$. Then
\begin{equation}
	DW[A] = D\Psi[\lambda(A)] \cdot D\lambda[A]\,. \label{eq:potentialEigenvalueDerivative}
\end{equation}
It is therefore useful to compute the derivative $D\lambda[A]$ of the eigenvalue function. Since Lemma \ref{lemma:isotropicFunctionDerivative} implies
\[
	\lambda(Q^TAQ) = \lambda(A) \:\Longrightarrow\: D\lambda[Q^TAQ] = Q^T\,D\lambda[A]\,Q\,,
\]
the derivative of $\lambda$ at $A$ is determined by the derivative at the diagonal matrix corresponding to $A$. We will therefore assume w.l.o.g. that $A$ is already a diagonal matrix.\\
The eigenvalues $\lambda_i$ of $A$ are characterized by
\begin{equation}
	\det(A-\lambda_i \id)=0\,. \label{eq:eigenvalueDefinition}
\end{equation}
Let $H\in\Symn$. We compute the first order approximation of \eqref{eq:eigenvalueDefinition}:
\begin{align}
	&\quad\:\:\,\det(A+H - \lambda_i(A+H)\cdot\id) = 0\nnl
	&\Rightarrow\: \det(A+H-[\lambda_i(A)\id + [D\lambda_i(A).H]\cdot \id + \dots]) = 0\nnl
	&\Rightarrow\: \det([A-\lambda_i(A)\id]+H-[D\lambda_i(A).H]\cdot \id + \dots) = 0\nnl
	&\Rightarrow\: \underbrace{\det[A-\lambda_i(A)\id]}_{=0} + \innerproduct{\Cof[A-\lambda_i(A)\id]\,,\: H-[D\lambda_i(A).H]\cdot \id + \dots} + \dots = 0 \label{eq:diagonalMinusSimpleEigenvalueExample}
\end{align}
By ignoring higher order terms we obtain
\begin{equation}
	\innerproduct{\Cof[A-\lambda_i(A)\id]\,,\: H-[D\lambda_i(A).H]\cdot \id} = 0 \label{eq:ignoredHigherOrderTerms}
\end{equation}
Recall that $A$ is diagonal by assumption. Since $A$ commutes with diagonal matrices $H$ (and thus the derivative $DW[A].H$ could be computed by more direct means), we are only interested in cases where the symmetric matrix $H$ is \emph{off-diagonal}, i.e.\ $H_{i,i}=0$ for $i=1,\dotsc,n$. But then
\[
	\innerproduct{\,\underbrace{\Cof[A-\lambda_i(A)\id]}_{\text{diagonal}},\: \underbrace{H\vphantom{[]}}_{\mathclap{\text{off-diagonal}}}\,} = 0\,,
\]
thus \eqref{eq:ignoredHigherOrderTerms} reduces to
\[
	\innerproduct{\Cof[A-\lambda_i(A)\id]\,,\: [D\lambda_i(A).H]\cdot \id} = 0\,,
\]
which we can also write as
\[
	(D\lambda_i(A).H)\cdot\tr\left(\Cof[A-\lambda_i(A)\id]\right) = 0\,.
\]
To conclude that $D\lambda_i(A).H=0$ it remains to show that  $\tr\left(\Cof[A-\lambda_i(A)\id]\right) \neq 0$. Assuming that the diagonal entries of $A$ are ordered we write $A=\diag(\lambda_1, \dots, \lambda_n)$ and find
\[
	A - \lambda_i(A)\id = \matr{\lambda_1-\lambda_i & & 0\\ & \ddots & \\ 0 & & \lambda_n-\lambda_i}
\]
and thus
\[
	\Cof[A - \lambda_i(A)\id] = \matr{\prod\limits_{k\neq1}(\lambda_k-\lambda_i) & & 0\\ & \ddots & \\ 0 & & \prod\limits_{k\neq n}(\lambda_k-\lambda_i)}\,.
\]
We compute the trace:
\begin{align*}
	\tr\left(\Cof[A - \lambda_i(A)\id]\right) &= \sum_{j=1}^n \, \prod\limits_{\substack{k=1\\k\neq j}}^n (\lambda_j-\lambda_i) = \prod\limits_{\substack{k=1\\k\neq i}}^n (\lambda_j-\lambda_i)\,,
\end{align*}
where the second equality holds due to the fact that the product is zero if it contains the factor $(\lambda_i-\lambda_i)$. Hence this term is nonzero if and only if all eigenvalues of $A$ are simple, in which case we can conclude that $D\lambda_i(A).H = 0$ for all off-diagonal $H\in\Symn$.

Using these results, we can prove the following, which is a simple corollary to Proposition \ref{prop:primaryPotential}:
\begin{corollary}
Let $f\in C^1(\R)$, $F\in C^2(\R)$ with $F'=f$ and let $A\in\Symn$ such that all eigenvalues of $A$ are simple. Then the function
\[
	W: \Symn\to\R,\quad W(M) = \sum_{i=1}^n F(\lambda_i(M))
\]
is differentiable at $A$ with
\[
	DW[A] = f(A) = Q^T \diag(f(\lambda_1),\dotsc,f(\lambda_n)) \,Q\,,
\]
where $A=Q^T \diag(\lambda_1,\dotsc,\lambda_n) \,Q$ is the spectral decomposition of $A$.
\end{corollary}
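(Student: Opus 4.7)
The plan is to combine the preceding appendix computations with a decomposition of the perturbation direction into diagonal and off-diagonal parts. By Lemma~\ref{lemma:isotropicFunctionDerivative} applied to the isotropic function $W$, it suffices to establish the formula at the diagonal representative $D = \diag(\lambda_1,\dotsc,\lambda_n)$; the general case $A = Q^T D Q$ follows by conjugation, since $f(A) = Q^T f(D) Q$ and both $DW$ and the inner product transform accordingly.

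So fix $A = \diag(\lambda_1,\dotsc,\lambda_n)$ with pairwise distinct $\lambda_i$. Write $\Psi(x_1,\dotsc,x_n) = \sum_{i=1}^n F(x_i)$, so that $W = \Psi \circ \lambda$. Under the simplicity hypothesis, the eigenvalue map $\lambda\colon\Symn\to\R^n$ is differentiable at $A$ by the classical result cited as \cite{Magnus1985eigen}, and the chain rule together with $D\Psi[\lambda(A)] = (F'(\lambda_1),\dotsc,F'(\lambda_n)) = (f(\lambda_1),\dotsc,f(\lambda_n))$ yields
\[
	DW[A].H \;=\; \sum_{i=1}^n f(\lambda_i)\cdot D\lambda_i[A].H
\]
for every $H \in \Symn$.

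Now decompose $H = \Hdiag + \Hoff$ into its diagonal and off-diagonal parts. For the off-diagonal part, the implicit-function computation already carried out in this appendix (expanding $\det(A+H - \lambda_i(A+H)\,\id)=0$ and exploiting that $\tr(\Cof[A-\lambda_i(A)\,\id]) = \prod_{k\neq i}(\lambda_k - \lambda_i) \neq 0$ precisely under the simplicity assumption) yields $D\lambda_i[A].\Hoff = 0$ for each $i$. Hence $DW[A].\Hoff = 0$, which agrees with $\innerproduct{f(A),\,\Hoff} = 0$ since $f(A)$ is diagonal and $\Hoff$ is off-diagonal. For the diagonal part, $A$ and $\Hdiag$ commute, so the eigenvalues of $A + t\,\Hdiag$ are simply $\lambda_i + t\,\Hdiag_{ii}$, and a direct differentiation gives
\[
	DW[A].\Hdiag \;=\; \sum_{i=1}^n f(\lambda_i)\,\Hdiag_{ii} \;=\; \innerproduct{f(A),\,\Hdiag}\,.
\]
Adding the two contributions and using linearity yields $DW[A].H = \innerproduct{f(A),\,H}$, i.e.\ the asserted identity $DW[A] = f(A)$.

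The main obstacle is really only careful bookkeeping: one must verify that the appendix's derivation of $D\lambda_i[A].\Hoff = 0$ genuinely goes through under the simplicity hypothesis (in particular the nonvanishing of $\tr(\Cof[A-\lambda_i(A)\,\id])$) and that the conjugation transfer from $D$ to $A = Q^T D Q$ is compatible with the claimed spectral form of $f(A)$. Neither step poses a substantive difficulty, so the corollary follows essentially by assembling the ingredients already developed in the preceding subsections.
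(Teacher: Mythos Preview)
Your proof is correct and follows essentially the same approach as the paper: reduce to the diagonal case via the isotropy lemma, split $H$ into diagonal and off-diagonal parts, handle the diagonal part by direct computation of the eigenvalues of $A+t\Hdiag$, and handle the off-diagonal part using the appendix result $D\lambda_i[A].\Hoff=0$ (valid under simplicity). The only cosmetic difference is that you invoke the chain rule $DW = D\Psi[\lambda]\cdot D\lambda$ uniformly from the start, whereas the paper computes the diagonal case as a raw limit and only appeals to the chain rule for the off-diagonal part.
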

\begin{proof}
According to Lemma \ref{lemma:isotropicFunctionDerivative}, $DW[Q^TXQ] = Q^TDW[X]Q$, hence we find
\[
	DW[A] = Q^T DW[\diag(\lambda_1,\dotsc,\lambda_n)] \,Q\,.
\]
Therefore it remains to show that
\begin{equation}
	DW[\diag(\lambda_1,\dotsc,\lambda_n)].H = \innerproduct{\diag(f(\lambda_1),\dotsc,f(\lambda_n)),\: H} \label{eq:simpleCorollaryEquationToShow}
\end{equation}
for all $H\in\Symn$ and pairwise different $\lambda_1,\dotsc,\lambda_n$.

We first consider the case of diagonal matrices $H = \Hdiag = \diag(h_1,\dotsc,h_n)$. Writing $\Adiag = \diag(\lambda_1,\dotsc,\lambda_n)$ we find
\[
	W(\Adiag+t\Hdiag) = W(\diag(\lambda_1+th_1,\dotsc,\lambda_n+th_n)) = \sum_{i=1}^n F(\lambda_i+th_i)\,,
\]
thus
\begin{align}
	DW[\Adiag].\Hdiag &= \lim_{t\to 0} \:\tel{t} (W(\Adiag+t\Hdiag)-W(\Adiag))\\
	&= \lim_{t\to 0} \:\tel{t} \sum_{i=1}^n F(\lambda_i+th_i) - F(\lambda_i) = \sum_{i=1}^n F'(\lambda_i)\,h_i\nnl
	&= \innerproduct{\diag(f(\lambda_1),\dotsc,f(\lambda_n)),\: \diag(h_1,\dotsc,h_n)} = \innerproduct{f(\Adiag),\:\Hdiag}\,.\nonumber
\end{align}
Now let $H=\Hoff$ be a symmetric off-diagonal matrix, i.e.\ $\Hoff_{i,i}=0$ for $i=1,\dotsc,n$. Using equation \eqref{eq:potentialEigenvalueDerivative}:
\[
	DW[A] = D\Psi[\lambda(A)] \cdot D\lambda[A]\,,
\]
as well as the result of the previous considerations for diagonal $A$ and off-diagonal $\Hoff$:
\[
	D\lambda[\diag(\lambda_1,\dotsc,\lambda_n)].\Hoff = 0\,,
\]
we conclude
\[
	DW[\diag(\lambda_1,\dotsc,\lambda_n)].\Hoff = 0\,.
\]
Finally, for arbitrary $H\in\Symn$, we can write $H=\Hdiag+\Hoff$ with a diagonal matrix $\Hdiag$ and a symmetric off-diagonal matrix $\Hoff$. Then
\begin{align}
	DW[\Adiag].H &= DW[\Adiag].\Hdiag + DW[\Adiag].\Hoff = DW[\Adiag].\Hdiag\nnl
	&= \innerproduct{f(\Adiag),\:\Hdiag} = \innerproduct{f(\Adiag),\:H}\,,
\end{align}
showing \eqref{eq:simpleCorollaryEquationToShow} and concluding the proof.

\end{proof}

\end{appendix}
} %

\end{document}